\newtheorem*{rep@theorem}{\rep@title}
\newcommand{\newreptheorem}[2]{%
	\newenvironment{rep#1}[1]{%
		\def\rep@title{#2 \ref{##1}}%
		\begin{rep@theorem}}%
		{\end{rep@theorem}}}
\numberwithin{equation}{section}
\newtheorem{theorem}{Theorem}[section]
\newtheorem{proposition}[theorem]{Proposition}
\newtheorem{corollary}[theorem]{Corollary}
\newtheorem{lemma}[theorem]{Lemma}
\theoremstyle{definition}
\DeclareMathOperator\lk{\mathrm{lk}}
\DeclareMathOperator\rk{\mathrm{rk}}
\DeclareMathOperator\cost{\mathrm{cost}}
\DeclareMathOperator{\supp}{\mathrm{supp}}
\newcommand{\field}{\mathbbm{k}}
\newcommand{\ZZ}{{\mathbb Z}}
\newcommand{\mub}{\mathcal{M}}
\newcommand{\mideal}{\ensuremath{\mathfrak{m}}}
\newcommand{\Hom}{\ensuremath{\mathrm{Hom}}\hspace{1pt}}
\newcommand{\Ext}{\ensuremath{\mathrm{Ext}}\hspace{1pt}}
\title{Ext and local cohomology modules of face rings of simplicial posets}
\author{Connor Sawaske\\
\small Department of Mathematics\\[-0.8ex]
\small University of Washington\\[-0.8ex]
\small Seattle, WA 98195-4350, USA\\[-0.8ex]
\small \texttt{sawaske@math.washington.edu}
}
\begin{document}
%%%%%%%%%%%%%%%%%%%%%%%%%%%%%%%%%%%%%%%%%%%%
%%%%%%%%%%%%%%%%%%%%%%%%%%%%%%%%%%%%%%%%%%%%
\maketitle

\begin{abstract}There are a large number of theorems detailing the homological properties of the Stanley--Reisner ring of a simplicial complex. Here we attempt to generalize some of these results to the case of a simplicial poset. By investigating the combinatorics of certain modules associated with the face ring of a simplicial poset from a topological viewpoint, we extend some results of Miyazaki and Gr\"abe to a wider setting.
\end{abstract}

%%%%%%%%%%%%%%%%%%%%%%%
%Introduction
%%%%%%%%%%%%%%%%%%%%%%%

\section{Introduction}
The Stanley--Reisner ring of a simplicial complex has been one of the most useful tools in the study of combinatorial properties of simplicial polytopes, spheres, and manifolds for decades. From the upper bound theorem to the $g$-theorem, this algebraic construction has played a central role in many of the main results in this area of research. Its structure is well-understood, and the translation of geometric properties of a simplicial complex into algebraic statements about the associated ring is often clear and elegant. For an excellent overview of the historical advances allowed by these rings, the reader is referred to \cite[Sections II and III]{Stanley}.

When shifting from simplicial complexes to simplicial posets, the situation changes rather dramatically. Stanley defined the face ring of a simplicial poset as a generalization of the Stanley--Reisner ring (\cite{StanleyPosets}); in the case that the poset in question is the face lattice of a simplicial complex, the two rings are isomorphic. Unfortunately, in this broader scope the structure of the face ring becomes much less tangible. Though many classification theorems for simplicial complexes have analogs in the world of simplicial posets (see \cite[Theorem 3.10]{StanleyPosets} for an analog of the $g$-theorem, or \cite{BrowderKlee}, \cite{Masuda}, and \cite{Murai} for further abstractions), many algebraic statements about Stanley--Reisner rings do not have generalizations to face rings of simplicial posets.

The purpose of this paper is to provide such generalizations. In particular, we wish to extend the homological properties of Stanley--Reisner rings established by Gr\"abe, Hochster, Miyazaki, Reisner, and Schenzel to the world of simplicial posets and their face rings, in the combinatorial spirit of the work begun by Duval in \cite{Duval}. It is worth noting that these face rings are also squarefree modules; these objects have been studied from a various viewpoints by Yanagawa (see, e.g., \cite{Yanagawa1} and \cite{Yanagawa2}). Furthermore, similar topics have been investigated from a sheaf-theoretic viewpoint by Brun and R\"omer in \cite{BrunRomer} and by Brun, Bruns, and R\"omer in \cite{BrunBrunsRomer}.

Before stating our main results, we will fix some notation. Let $P$ be a simplicial poset with vertex set $V=\{x_1, \ldots, x_n\}$ and let $\supp(z)=\{i:x_i\preceq z\}$ denote the support of an element $z\in P$. Define $A=\field[x_1, \ldots, x_n]$, let $\mideal$ be the irrelevant ideal $(x_1, \ldots, x_n)$, and let $\mideal_{\ell}$ be the ideal $(x_1^\ell, \ldots, x_n^\ell)$. Lastly, let $A_P$ be the face ring of $P$; we consider $A_P$ as a $\ZZ^n$-graded $A$-module (we defer all definitions until the next section). Our first new result is an extension of Miyazaki's calculation of the graded pieces of the $\Ext$-modules of a Stanley--Reisner ring (\cite[Theorem 1]{Characterizations}):
\begin{reptheorem}{ExtTheorem}Let $P$ be a simplicial poset with vertex set $V$, and let $\alpha\in \ZZ^n$. Set $B=\{i:-\ell<\alpha_i<0\}$, $C=\{i:\alpha_i=-\ell\}$, and $D=\{i:\alpha_i>0\}$. If $-\ell\le \alpha_i$ for all $i$, then
	\[
	\Ext_A^i(A/\mideal_\ell, A_P)_\alpha \cong \bigoplus_{\mathclap{\substack{\supp(z)=B\cup D}}} \widetilde{H}^{i-|B|-|C|-1}\left([\hat{0}, z_D]\times\lk_P(z)_{V\smallsetminus C}\right)
	\]
	and $\Ext_A^i(A/\mideal_\ell, A_P)_\alpha=0$ otherwise. In particular, if $D\not=\emptyset$ then $\Ext_A^i(A/\mideal_{\ell}, A_P)_\alpha = 0$.
\end{reptheorem}

By considering $H_\mideal^i(A_P)$ as the direct limit of the $\Ext_A^i(A/\mideal_\ell, A_P)$ modules, we will re-prove the result of Duval (\cite[Theorem 5.9]{Duval}) that calculates the dimensions of the graded pieces of $H_\mideal^i(A_P)$. By tracing the $A$-module structure of $\Ext_A^i(A/\mideal_\ell, A_P)$ through this direct limit, we will further be able to establish an extension of Gr\"abe's result detailing the corresponding structure of $H_\mideal^i(A_P)$ (\cite[Theorem 2]{Grabe}). More precisely, our second main contribution is:

\begin{reptheorem}{CohomologyStructureTheorem}Let $\alpha=(\alpha_1, \ldots, \alpha_n)\in \ZZ^n$. Then
\[
H_\mideal^i(A_P)_\alpha\cong \bigoplus_{\mathclap{\supp(w)=\{j:\alpha_j\not=0\}}} H^{i-1}(P, \cost_P(w))
\]
if $\alpha\in \ZZ_{\le 0}^n$ and $H_\mideal^i(A_P)_\alpha = 0$ otherwise. Under these isomorphisms, the $A$-module structure of $H_\mideal^i(A_P)$ is given as follows. Let $\gamma = \alpha + \deg(x_j)$. If $\alpha_j<-1$, then $\cdot x_j: H_\mideal^i(A_P)_\alpha \to H_\mideal^i(A_P)_\gamma$ corresponds to the direct sum of identity maps
\[
\bigoplus_{\mathclap{\supp(w)=\{j:\alpha_j\not=0\}}} H^{i-1}(P, \cost_P(w))\to \bigoplus_{\mathclap{\supp(w)=\{j:\alpha_j\not=0\}}} H^{i-1}(P, \cost_P(w)).
\]
If $\alpha_j = -1$, then $\cdot x_j$ corresponds to the direct sum of maps
\[
\bigoplus_{\mathclap{\supp(w)=\{j:\alpha_j\not=0\}}} H^{i-1}(P, \cost_P(w))\to \bigoplus_{\mathclap{\supp(z)=\{j:\gamma_j\not=0\}}} H^{i-1}(P, \cost_P(z))
\]
induced by the inclusions of pairs $(P, \cost_P(w\smallsetminus\{x_j\}))\to (P, \cost_P(w))$. If $\alpha_j\ge 0$, then $\cdot x_j$ is the zero map.
\end{reptheorem}

The structure of the paper is as follows. In Section \ref{sect:preliminaries} we will review notation and some foundational results, then examine the structure of $A_P$ in preparation for later computations. In Section \ref{sect:extmodules} and Section \ref{sect:localcohomology} we will prove Theorems \ref{ExtTheorem} and \ref{CohomologyStructureTheorem}, respectively. In Section \ref{sect:characterizing} we will apply these results in a discussion of Cohen-Macaulay and Buchsbaum properties. Finally, we will close with some additional comments in Section \ref{sect:comments}.

%%%%%%%%%%%%%%%%%%%%%%%%%%%
%Preliminaries
%%%%%%%%%%%%%%%%%%%%%%%%%%%

\section{Preliminaries} \label{sect:preliminaries}

%%%%%%%%%%%%
%Combinatorics and Geometry
%%%%%%%%%%%%%

\subsection{Combinatorics and topology}
A \textbf{simplicial poset} is a partially ordered finite set $(P, \preceq)$ satisfying the following two properties:
\begin{enumerate}[(i)]
	\item There exists an element $\hat{0}\in P$ satisfying $\hat{0}\preceq y$ for all $y\in P$.
	\item For every element $y\in P$, the interval $[\hat{0}, y]$ is a Boolean lattice.
\end{enumerate}
The \textbf{rank} of an element $y$ of $P$ is the maximal length among chains from $\hat{0}$ to $y$ in $P$ and is denoted $\rk_P(y)$. When the poset $P$ is understood, we sometimes abbreviate this to $\rk(y)$. The \textbf{atoms} of $P$ are the elements of rank $1$.

If $y$ is an element of a poset $P$, then we define the \textbf{link} of $y$ in $P$ by
\[
\lk_P(y):=\{w\in P: y\preceq w\}.
\]
Note that $y$ plays the role of $\hat{0}$ in $\lk_P(y)$, and, more generally, if $w\in \lk_P(y)$ then $\rk_{\lk_P(y)}(w)=\rk_P(w)-\rk_P(y)$.
Similarly, we define the \textbf{contrastar} of $y$ in $P$ by
\[
\cost_P(y):=\{w\in P: y\npreceq w\}.
\]

If the atoms of $P$ are labeled $x_1, \ldots, x_n$, then we define the \textbf{support} of an element $y\in P$ to be the set
	\[
	\supp(y):=\{i:x_i\preceq y\}.
	\]
Given $B\subseteq \supp(y)$, we denote by $y_B$ the unique face of $P$ satisfying $y_B\preceq y$ and $\supp(y_B)=B$ (the existence and uniqueness of $y_B$ follows from the fact that $[\hat{0}, y]$ is a Boolean lattice). Similarly, if $W\subseteq \{1, \ldots, n\}$, then define
	\[
	P_W := \{y\in P: \supp(y)\subseteq W\}.
	\]
If $w$  and $y$ are two elements in $P$, then we define 
	\[
	\mub(w, y):=\{z\in P: z \text{ is a minimal upper bound for $w$ and $y$}\}.
	\]

The properties defining simplicial posets are preserved under some important operations and constructions, as outlined in the following proposition (\cite[Proposition 2.6]{Bjorner}). Recall that if $P$ and $Q$ are posets, then a partial order may be placed on the product $P\times Q$ by defining $(p_1, q_1)\preceq(p_2, q_2)$ if $p_1\preceq p_2$ in $P$ and $q_1\preceq q_2$ in $Q$.
\begin{proposition}\label{posetConstructions}Let $P$ and $Q$ be simplicial posets, and let $y\in P$. Then $P\times Q$ and $\lk_P(y)$ are both simplicial posets. If $R$ is an order ideal of $P$ (that is, $R$ is a subset of $P$ that is closed under $\preceq$), then $R$ is a simplicial poset as well; in particular, $\cost_P(y)$ is a simplicial poset.
\end{proposition}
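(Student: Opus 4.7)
The plan is to verify the two defining axioms of a simplicial poset, namely the existence of a $\hat{0}$ and the Boolean-ness of all principal lower intervals, separately for each of the three constructions $P\times Q$, $\lk_P(y)$, and an arbitrary order ideal $R$, and then to deduce the statement about $\cost_P(y)$ as a corollary of the order-ideal case.

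For $P\times Q$, I would take $(\hat{0}_P,\hat{0}_Q)$ as the minimum element, which is clearly below any pair by the product order. For a given $(p,q)\in P\times Q$ the interval from $(\hat{0}_P,\hat{0}_Q)$ to $(p,q)$ is canonically $[\hat{0}_P,p]\times[\hat{0}_Q,q]$. Each factor is Boolean by hypothesis, and a product of Boolean lattices of ranks $r$ and $s$ is a Boolean lattice of rank $r+s$ (concretely, if $[\hat{0}_P,p]\cong 2^A$ and $[\hat{0}_Q,q]\cong 2^B$ for disjoint copies $A,B$, then the product is isomorphic to $2^{A\sqcup B}$). For $\lk_P(y)$, the element $y$ itself plays the role of $\hat{0}$, and the interval $[y,w]$ computed inside $\lk_P(y)$ agrees with the interval $[y,w]$ in $P$; since the latter is a sub-interval of the Boolean lattice $[\hat{0},w]$, and sub-intervals of Boolean lattices are Boolean, the axiom holds.

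For an order ideal $R\subseteq P$, I would first observe that $\hat{0}\in R$ because $\hat{0}\preceq y$ for any chosen $y\in R$ and $R$ is $\preceq$-closed downward. For $y\in R$, the interval $[\hat{0},y]$ computed in $R$ coincides with $[\hat{0},y]$ in $P$ (again by downward closure), which is Boolean. Finally, for $\cost_P(y)$ the only thing to check is that it is an order ideal: if $w\in\cost_P(y)$ and $w'\preceq w$, then $y\preceq w'$ would force $y\preceq w$ by transitivity, contradicting $y\npreceq w$, so $w'\in\cost_P(y)$.

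There is no real obstacle here; the arguments are essentially bookkeeping once the correct identifications of intervals are made. The only mildly delicate point is the product case, where one must cleanly identify $[(\hat{0}_P,\hat{0}_Q),(p,q)]$ with $[\hat{0}_P,p]\times[\hat{0}_Q,q]$ and then invoke the fact that a product of Boolean lattices is Boolean; this is standard but deserves explicit mention since it is the only place where the hypothesis on $P$ and $Q$ is used in a nontrivial way.
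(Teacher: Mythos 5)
Your proof is correct. Worth noting that the paper itself does not prove this proposition but simply cites it to Bj\"orner (Proposition 2.6 of that reference), so there is no internal argument to compare against; your proof is the standard one, and all three verifications (the identification $[(\hat{0}_P,\hat{0}_Q),(p,q)]\cong[\hat{0}_P,p]\times[\hat{0}_Q,q]$, the fact that subintervals of Boolean lattices are Boolean, and the downward-closure argument for $\cost_P(y)$) are exactly right. The one tacit assumption you should make explicit is that the order ideal $R$ (and hence $\cost_P(y)$, which forces $y\neq\hat{0}$) is nonempty; otherwise there is no $\hat{0}$ and the first axiom fails. Once that is stated, the argument is complete.
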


There is a wealth of geometric meaning behind simplicial posets. In particular, every simplicial poset may be realized as the face poset of a regular CW complex (\cite[Section 3]{Bjorner}). Such a complex is much like a simplicial complex in that the cells are simplices. However, these more general objects may differ from simplicial complexes in that the intersection of two faces needs only to be a (possibly empty) subcomplex of each of the two faces, instead of a single simplex. For example, the following figure depicts the face poset of a cone formed by two triangles with two pairs of edges identified and vertices labeled $1$, $2$, and $3$, with the apex of the cone being the vertex $1$. The elements of the poset have been labeled according to their support.
\[
\begin{tikzpicture}
\node (min) at (0, 0) {$\emptyset$};
\node (1) at (-3, 2) {$\{1\}$};
\node (2) at (0, 2) {$\{2\}$};
\node (3) at (3, 2) {$\{3\}$};
\node (12) at (-3, 4) {$\{1, 2\}$};
\node (13) at (-1, 4) {$\{1, 3\}$};
\node (231) at (1, 4) {$\{2, 3\}$};
\node (232) at (3, 4) {$\{2, 3\}$};
\node (1231) at (-1, 6) {$\{1, 2, 3\}$};
\node (1232) at (1, 6) {$\{1, 2, 3\}$};
\draw (min) -- (1) -- (12) --(1231)--(13)--(1232)--(232)--(3)--(min)--(2)--(232)
(1)--(13)--(3)--(231)--(2)--(12)--(1232)
(1231)--(231);
\draw[preaction={draw=white, -,line width=6pt}] (12)--(2)--(232);
\draw[preaction={draw=white, -,line width=6pt}] (2)--(231);
\draw[preaction={draw=white, -,line width=6pt}] (12)--(1232)--(13);
\end{tikzpicture}
\]

From this geometric viewpoint, we refer to the atoms of $P$ as \textbf{vertices} and to all elements of $P$ as \textbf{faces}. The \textbf{dimension} of a face $y\in P$ is $\dim(y):=\rk(y)-1$, and the dimension of $P$ is $\dim(P):=\max \{\dim(y):y\in P\}$. If all maximal faces of $P$ have the same dimension, then we say that $P$ is \textbf{pure}. We denote the regular CW complex associated to the poset $P$ by $|P|$, and we call it the \textbf{geometric realization} of $P$.

In addition to the geometric realization of $P$, there is another associated geometric object known as the \textbf{order complex}. First, let $\overline{P}$ denote the poset $P\smallsetminus\{\hat{0}\}$. The order complex of $\overline{P}$ is the simplicial complex $\Delta(\overline{P})$ whose elements are the chains of $\overline{P}$. In fact, $\Delta(\overline{P})$ can be realized as the face poset of the barycentric subdivision of $|P|$ (\cite[Theorem 2.1.7]{TopologyOfCW}). Furthermore, $\Delta(\overline{P\times Q})$ isomorphic as a simplicial complex to the join $\Delta(\overline{P})*\Delta(\overline{Q})$.

Let $P$ be a simplicial poset with vertices $\{x_1, \ldots, x_n\}$, and let $\field$ be a field. Denote by $\widetilde{C}^i(P)$ the vector space over $\field$ whose basis elements correspond to elements of rank $i+1$ in $P$. Next let $\mathcal{O}$ be a partial ordering of the indices $\{1, \ldots, n\}$ of the vertices of $P$ such that the restriction of $\mathcal{O}$ to the support of any one face is a total order; we call such an ordering an \textbf{orientation}. Define a coboundary operator $\delta:\widetilde{C}^i(P)\to\widetilde{C}^{i+1}(P)$ by
\[
\delta(y)=\sum_{i=1}^n(-1)^{u_i}\sum_{\mathclap{\substack{z\in \mub(x_i, y)}}} z
\]
where $u_i=|\{j\in\supp(y): i<_\mathcal{O} j\}|$. We define the \textbf{reduced cohomology groups} $\widetilde{H}^i(P)$ of $P$ with coefficients in $\field$ to be the homology of the chain complex $\widetilde{C}^\bullet(P)$ with respect to the coboundary $\delta$. If $Q$ is a non-empty order ideal in $P$, then we define the \textbf{relative cohomology groups} $H^i(P, Q)$ of the pair $(P, Q)$ with coefficients in $\field$ to be the homology of the chain complex $C^\bullet(P, Q):=\widetilde{C}^\bullet(P)/\widetilde{C}^\bullet(Q)$ with respect to the coboundary induced by $\delta$. This definition of cohomology is identical to the cellular cohomology of the CW complex $|P|$ (see, e.g., \cite[Sections 2.2 and 3.1]{Hatcher}). From this fact (along with its relative analog), the equivalence of cellular, singular, and simplicial (co)homology imply the following theorem.

\begin{theorem}
	Let $P$ be a simplicial poset, and let $Q$ be a non-empty order ideal of $P$. If $\widetilde{H}^i(|P|)$ (resp. $H^i(|P|, |Q|)$) denotes the singular cohomology of $|P|$ (resp. $(|P|, |Q|)$) and $\widetilde{H}^i(\Delta(\overline{P}))$ (resp. $H^i(\Delta(\overline{P}), \Delta(\overline{Q}))$)denotes the simplicial cohomology of $\Delta(\overline{P})$ (resp. $(\Delta(\overline{P}), \Delta(\overline{Q})$), then
	\[
	 \widetilde{H}^i(P)\cong\widetilde{H}^i(|P|)\cong\widetilde{H}^i(\Delta(\overline{P}))
	\]
	and
	\[
	H^i(P, Q)\cong H^i(|P|, |Q|)\cong 	H^i(\Delta(\overline{P}), \Delta(\overline{Q})).
	\]
\end{theorem}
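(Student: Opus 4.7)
My plan is to prove the theorem by identifying the combinatorial cochain complex $\widetilde{C}^\bullet(P)$ with the cellular cochain complex $C^\bullet_{\mathrm{cell}}(|P|)$ and then invoking standard comparison theorems of algebraic topology. First I would recall that $|P|$ is a regular CW complex whose $i$-cells are in bijection with the rank-$(i+1)$ elements of $P$, so the underlying $\field$-vector spaces of $\widetilde{C}^i(P)$ and $C^i_{\mathrm{cell}}(|P|)$ are canonically identified. In the relative case, any order ideal $Q$ gives rise to a subcomplex $|Q| \subseteq |P|$, so that $C^\bullet(P, Q) = \widetilde{C}^\bullet(P)/\widetilde{C}^\bullet(Q)$ matches the relative cellular cochain complex of the CW pair $(|P|, |Q|)$.

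The main step, and the main obstacle, is verifying that $\delta$ agrees with the cellular coboundary under this identification. Given $y \in P$ of rank $i+1$, the faces $z$ of rank $i+2$ appearing in the cellular coboundary of $y$ are precisely those with $y \prec z$; since $[\hat{0}, z]$ is Boolean, any such $z$ is obtained by adjoining a single vertex $x_i \notin \supp(y)$, so this set is $\bigsqcup_{i} \mub(x_i, y)$ (with the terms for $x_i \in \supp(y)$ contributing no new rank-$(i+2)$ face). The orientation $\mathcal{O}$ pins down a consistent choice of orientations on all closed cells via the induced total order on each support, and checking incidence numbers on a single Boolean interval $[\hat{0}, z]$ -- which realizes a standard simplex -- reduces the sign computation to the familiar alternating-sign formula for a simplicial coboundary; the exponent $u_i = |\{j \in \supp(y) : i <_{\mathcal{O}} j\}|$ is exactly the position at which $x_i$ is inserted into the ordered vertex list of $y$. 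I would handle this cell-by-cell, each time reducing to the standard simplex, which is where the whole sign bookkeeping is classical.

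With the isomorphism of cochain complexes $\widetilde{C}^\bullet(P) \cong C^\bullet_{\mathrm{cell}}(|P|)$ established (and likewise for the quotient complex in the relative case), the statement $\widetilde{H}^i(P) \cong \widetilde{H}^i(|P|)$ and its relative analog are immediate from the classical equivalence of cellular and singular (co)homology on a CW complex, as in Hatcher, Sections 2.2 and 3.1. To obtain the last isomorphism in each row, I would invoke the fact already cited in the excerpt that $\Delta(\overline{P})$ is a simplicial realization of the barycentric subdivision of $|P|$, so that $|\Delta(\overline{P})| \cong |P|$ as topological spaces; combining this homeomorphism with the equivalence between the simplicial cohomology of an abstract simplicial complex and the singular cohomology of its geometric realization yields $\widetilde{H}^i(\Delta(\overline{P})) \cong \widetilde{H}^i(|P|)$. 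The relative statement follows from the same argument applied to the subdivided pair $(\Delta(\overline{P}), \Delta(\overline{Q}))$, which realizes the barycentric subdivision of $(|P|, |Q|)$.
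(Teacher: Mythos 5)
Your approach is essentially identical to the paper's: the paper also observes that $\widetilde{C}^\bullet(P)$ is the cellular cochain complex of the regular CW complex $|P|$, then cites the standard equivalences between cellular, singular, and simplicial (co)homology (and the fact that $\Delta(\overline{P})$ is a barycentric subdivision of $|P|$) to conclude. You merely unpack the verification that $\delta$ agrees with the cellular coboundary, which the paper compresses into a single citation of Hatcher.
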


\subsection{Algebra}
Let $P$ be a simplicial poset with atoms (or vertices) $x_1, \ldots, x_n$. Fix $\field$ to be an infinite field and define $A$ to be the polynomial ring over $\field$ whose variables are the atoms of $P$:
\[
A:=\field[x_1, \ldots, x_n].
\]
Next, define $\field[P]$ to be the polynomial ring whose variables are the elements of $P$:
\[
\field[P]:=\field[y:y\in P].
\]
Let $I_P$ be the ideal of $\field[P]$ defined as follows. For each pair of elements $w$ and $y$ in $P$, if $\mub(w, y)=\emptyset$ then add $wy$ as a generator of $I_P$. If $\mub(w, y)\not=\emptyset$, then add
	\[
	wy-(w\wedge y)\sum_{z\in\mub(w, y)}z
	\]
as a generator of $I_P$. Finally, add $\hat{0}-1$ to $I_P$. Define the \textbf{face ring} $A_P$ to be the quotient $\field[P]/I_P$.

Given some elements $y_1, \ldots, y_k$ in $P$ and positive integers $p_1, \ldots, p_k$, let $m$ be the monomial $\prod_{i=1}^k y_i^{p_i}$ in $A_P$. We define the \textbf{support} of $m$ to be the set
\[
\supp(m):=\{i:x_i\preceq y_j\text{ for some $j$}\}.
\]
We say that the monomial $m$ is \textbf{standard} if $y_1\succ y_2\succ\cdots \succ y_k$. In this case, we call $y_1$ the \textbf{leading variable} of $m$. Of vital importance for our arguments will be the following fact (\cite[Lemma 3.4]{StanleyPosets}).

\begin{lemma}\label{ASLLemma}
	If $P$ is a simplicial poset, then $A_P$ is an algebra with straightening law (ASL) on $P$. In particular, the standard monomials form a basis for $A_P$ as a vector space over $\field$.
\end{lemma}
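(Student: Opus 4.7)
The plan is to establish the two standard properties of an ASL: that the standard monomials span $A_P$ as a $\field$-vector space, and that they are linearly independent.

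For spanning, consider any monomial $m = y_1 \cdots y_r$ in $\field[P]$. If $m$ is not standard, then two of its factors, say $w$ and $y$, must be incomparable in $P$, and the relation from $I_P$ applies: either $wy \equiv 0$ (when $\mub(w,y) = \emptyset$) or $wy \equiv (w \wedge y) \sum_{z \in \mub(w,y)} z$ in $A_P$, reducing $m$ to a $\field$-linear combination of new monomials. To prove termination, I would assign to each monomial the partition obtained by sorting the multiset of ranks $\{\rk(y_i)\}$ in decreasing order. For every $z \in \mub(w,y)$ one has $\rk(z) = |\supp(w) \cup \supp(y)|$ and $\rk(w \wedge y) = |\supp(w) \cap \supp(y)|$, so inclusion--exclusion gives $\rk(z) + \rk(w \wedge y) = \rk(w) + \rk(y)$, showing the partition sum is preserved. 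Moreover, incomparability of $w$ and $y$ forces $\rk(w \wedge y) < \min(\rk(w), \rk(y))$ and $\rk(z) > \max(\rk(w), \rk(y))$, so the pair $(\rk(w), \rk(y))$ is replaced by the strictly more spread-out pair $(\rk(w \wedge y), \rk(z))$, which strictly increases the partition in dominance order. Since dominance order on partitions of a fixed integer is a finite, hence noetherian, poset, the reduction procedure must terminate.

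For linear independence, I would invoke Bergman's Diamond Lemma, viewing the straightening relations as a rewriting system on $\field[P]$. Termination has been established, so it remains to verify confluence of all critical pairs. These arise from ambiguous triples $xyz$ in $\field[P]$ that admit two distinct initial straightenings---for instance, when $x$ is incomparable to $y$ and $y$ is incomparable to $z$, both $(xy)z$ and $x(yz)$ admit immediate rewrites. I would carry out a case analysis on the pairwise comparability relations among $x, y, z$, showing that the two parenthesizations lead to the same normal form via distributivity of multiplication over the $\mub$-sums together with the rank identity established above. By the Diamond Lemma, confluence combined with termination guarantees that the normal forms---the standard monomials---constitute a $\field$-basis of $A_P$, which is exactly the desired ASL property.

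The main obstacle is the confluence check: when $x, y, z$ are pairwise incomparable, the two parenthesizations expand into double sums involving $\mub(x,y)$, $\mub(y,z)$, and the meets $x \wedge y$ and $y \wedge z$, and reconciling these term by term requires a delicate combinatorial identity about how minimal upper bounds interact in a simplicial poset. An alternative route, which sidesteps this direct associativity check, is to construct an auxiliary $\field$-algebra on the multichains of $P$ whose multiplication is engineered to realize the straightening relations, and then match graded dimensions (for instance via Hilbert series) to force linear independence.
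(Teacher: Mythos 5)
This lemma is quoted from Stanley's \cite[Lemma 3.4]{StanleyPosets} and is not proved in the paper, so there is no in-text argument to compare against. Evaluated on its own terms, your spanning half is correct and your termination invariant is sound: in a simplicial poset $\rk$ equals the size of the support, so for $z\in\mub(w,y)$ inclusion--exclusion on supports gives $\rk(z)+\rk(w\wedge y)=\rk(w)+\rk(y)$, and replacing $(\rk(w),\rk(y))$ by $(\rk(w\wedge y),\rk(z))$ strictly raises the rank-partition in dominance order while preserving its total, whence termination on a finite poset (the case $w\wedge y=\hat{0}$ just drops a zero part and is harmless).

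The linear-independence half, however, is a plan rather than a proof, and the gap is in exactly the place you flag. Verifying confluence of the critical ambiguity $xyz$ for pairwise incomparable $x,y,z$ is the entire hard content of the ASL claim; you write that it ``requires a delicate combinatorial identity about how minimal upper bounds interact in a simplicial poset'' without supplying that identity, and the Diamond Lemma returns nothing until it is supplied. Your fallback (build an auxiliary $\field$-algebra on multichains and match Hilbert series) carries the same gap in disguise: showing that the proposed multiplication is associative \emph{is} the confluence check restated, and computing the Hilbert series of $\field[P]/I_P$ independently would itself require a Gr\"obner-basis argument, making that route circular unless a genuinely different model for $A_P$ is produced. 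A minor additional omission is that the rewriting system should also include the relation $\hat{0}\mapsto 1$. As written, then, the proposal establishes spanning but leaves linear independence---the crux of the lemma---unproven.
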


Note that $A$ is naturally $\ZZ^n$-graded by setting $\deg(x_i)=\mathbf{e}_i$, the $i$-th standard basis vector of $\ZZ^n$. We may extend this grading to $\field[P]$ by defining the degree of a variable $y$ to be
\[
\deg(y):=\sum_{i\in\supp(y)}\mathbf{e}_i.
\]
Since $I_P$ is homogeneous with respect to this grading, the quotient $A_P$ is $\ZZ^n$-graded as well. We will usually view $A_P$ as a $\ZZ^n$-graded $A$-module. 

Given some degree $\alpha=(\alpha_1, \ldots, \alpha_n)$ in $\ZZ^n$, we will also speak of its \textbf{support}, defined by
\[
\supp(\alpha):=\{i:\alpha_i\not=0\}.
\]
At times it will be necessary to focus only on the ``negative'' part of $\alpha$. This negative part is the degree $\alpha^-\in\ZZ^n$ defined by
\[
\alpha_i^-:=\left\{\begin{array}{cc} \alpha_i & \alpha_i<0 \\ 0 & \alpha_i \ge 0. \end{array}\right.
\]

\subsection{The structure of $A_P$}\label{sect:structureOfAP}
Although the face ring of a simplicial poset has many subtleties and does not generally behave as well as the Stanley--Reisner ring of a simplicial complex, it does still have some nice properties. We will now highlight two of these properties with the following lemmas that will be critical to later computations. 

\begin{lemma}\label{freshmansdream}
	Let $P$ be a simplicial poset, let $w, y\in P$, and let $q$ be a positive integer. Then in $A_P$,
	\[
	(wy)^q = (w\wedge y)^q\sum_{\mathclap{z\in\mub(w, y)}}z^q.
	\]
\end{lemma}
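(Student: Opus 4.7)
The plan is to induct on $q$. The base case $q=1$ is exactly the straightening relation used to define $I_P$, with the convention that the empty sum equals zero covering the degenerate case $\mub(w,y)=\emptyset$ (where the defining relation gives $wy=0$). For the inductive step, assuming the formula for $q$, I would multiply both sides by $wy$ and apply the base case to obtain
\[
(wy)^{q+1} = (w\wedge y)^q\Bigl(\sum_{z\in\mub(w,y)} z^q\Bigr)\cdot (w\wedge y)\sum_{z'\in\mub(w,y)} z' = (w\wedge y)^{q+1}\sum_{z,z'\in\mub(w,y)} z^q z'.
\]
The entire proof then reduces to showing that the ``cross terms'' vanish, i.e.\ that $z^q z'=0$ in $A_P$ whenever $z,z'\in\mub(w,y)$ are distinct, so that the double sum collapses to $\sum_z z^{q+1}$.

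To establish this vanishing, I would first observe that every $z\in\mub(w,y)$ has support exactly $\supp(w)\cup\supp(y)$: since $[\hat{0},z]$ is a Boolean lattice, the join $w\vee y$ taken inside $[\hat{0},z]$ exists, has support $\supp(w)\cup\supp(y)$, and is an upper bound of $w$ and $y$ lying below $z$, so minimality of $z$ forces $w\vee y=z$. Consequently, any two distinct elements of $\mub(w,y)$ have the same support.

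Next, I would argue that two distinct elements $z\neq z'$ of $P$ with $\supp(z)=\supp(z')$ admit no common upper bound: such an upper bound $u$ would place both $z$ and $z'$ inside the Boolean lattice $[\hat{0},u]$, on which the support map is injective. Hence $\mub(z,z')=\emptyset$, which means $zz'\in I_P$, so $zz'=0$ and therefore $z^qz'=0$ in $A_P$. Combined with the calculation above, this closes the induction.

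The main obstacle is the second observation — that two elements of equal support in a simplicial poset cannot share an upper bound — which is the distinctly combinatorial ingredient separating simplicial posets from arbitrary posets and is what makes this ``freshman's dream'' identity possible. The rest is routine manipulation with the straightening relations.
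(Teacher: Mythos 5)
Your proof is correct and follows essentially the same approach as the paper: both arguments reduce to showing that $zz'=0$ in $A_P$ for distinct $z,z'\in\mub(w,y)$, using the Boolean lattice structure of intervals $[\hat{0},u]$. The paper expands $\bigl(\sum z_i\bigr)^q$ directly and argues the contradiction from the uniqueness of minimal upper bounds in a Boolean lattice, while you induct on $q$ and route through the injectivity of the support map on $[\hat{0},u]$; these are cosmetic variations on the same idea.
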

\begin{proof}
	Set $\mub(w, y)=\{z_1, \ldots, z_k\}$. If $q=1$ or $k\le 1$, then the statement is immediate. If $k>1$ and $q>1$, then write
	\begin{align*}
	(wy)^q &= (w\wedge y)^q\left(\sum_{i=1}^k z_i\right)^q\\
	&=(w\wedge y)^q\sum_{i=1}^k z_i^q+(w\wedge y)^q\sum_\ell f_\ell
	\end{align*}
	for some monomials $f_\ell$, where each $f_\ell$ contains at least one product $z_iz_j$ with $z_i\not=z_j$. Suppose that $z_i$ and $z_j$ have some upper bound $b$ in $P$. Then $[\hat{0}, b]$ would be a Boolean lattice containing $w$ and $y$, in which $w$ and $y$ have at least two distinct minimal upper bounds. This is a contradiction, so $z_iz_j=0$ in $A_P$ and each $f_\ell=0$ as well.
\end{proof}

Our next lemma shows that the product of a standard monomial and a variable corresponding to a single vertex in $A_P$ may be expressed as a sum of ``nice'' standard monomials.

\begin{lemma}\label{prodOfMonomials}
	If $m=\prod_{i=1}^ky_i^{p_i}$ is a standard monomial in $A_p$ with $y_1\succ y_2\succ\cdots\succ y_k$ and $\ell$ is a positive integer, then $(x_j)^\ell m$ can be written in $A_p$ as 
	\[
	(x_j)^\ell m=\sum_{z\in\mub(x_j, y_1)}m_z
	\]
	where $m_z$ is a standard monomial with leading variable $z$.
\end{lemma}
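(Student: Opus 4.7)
My plan is to apply Lemma \ref{freshmansdream} to combine the factor $(x_j)^\ell$ with $y_1^{p_1}$, producing the sum indexed by $\mub(x_j, y_1)$, and then to straighten each resulting summand into a standard monomial with leading variable $z$. Setting $q = \min(\ell, p_1)$ and writing $(x_j)^\ell y_1^{p_1} = (x_j)^{\ell-q}\, y_1^{p_1-q}\, (x_j y_1)^q$, Lemma \ref{freshmansdream} applied to $(x_j y_1)^q$ yields
\[
(x_j)^\ell m = \sum_{z\in\mub(x_j, y_1)} N_z,\qquad N_z := z^q\,(x_j\wedge y_1)^q\,(x_j)^{\ell-q}\,y_1^{p_1-q}\,y_2^{p_2}\cdots y_k^{p_k}.
\]
It therefore suffices to show that each $N_z$ equals a standard monomial $m_z$ with leading variable $z$.

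Each factor of $N_z$ lies in the Boolean interval $[\hat{0}, z]$: one has $x_j\preceq z$ and $y_1\preceq z$ by the definition of $\mub(x_j, y_1)$, so $(x_j\wedge y_1)\preceq y_1\preceq z$, and $y_i\preceq y_1\preceq z$ for $i\geq 2$ by the standardness of $m$. The key technical step is the following killing observation, which I will verify next: if $a, b\preceq z$ in $P$ and $w\in\mub(a,b)$ satisfies $w\not\preceq z$, then $z\cdot w = 0$ in $A_P$. Indeed, if some $u\in\mub(z,w)$ existed, then $[\hat{0}, u]$ would be a Boolean lattice containing $a, b, w, z$; the join $a\vee_{[\hat{0},u]}b$ would have support $\supp(a)\cup\supp(b)\subseteq\supp(z)$ and so satisfy $a\vee_{[\hat{0},u]} b\preceq z$, and the minimality of $w\in\mub(a,b)$ would then force $w=a\vee_{[\hat{0},u]}b\preceq z$, contradicting $w\not\preceq z$.

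Because $N_z$ contains the factor $z^q$ with $q\geq 1$, the killing observation collapses the general ASL relation $ab=(a\wedge b)\sum_{w\in\mub(a,b)}w$ among factors of $N_z$ to the single-term Boolean relation $ab=(a\wedge b)(a\vee_{[\hat{0}, z]} b)$. Repeated application of this rule, as in the standard straightening procedure for the face ring of a simplex, reduces $N_z$ to a single standard monomial $m_z$ whose variables form a chain in $[\hat{0}, z]$; since $z$ survives the process with positive exponent and every other factor remains $\preceq z$, the element $z$ is the maximum appearing in $m_z$ and hence its leading variable. The main obstacle is the killing observation, since it is the only place where the global structure of $P$ enters rather than just the Boolean lattice $[\hat{0}, z]$; once it is in hand, everything else is routine straightening inside a Boolean lattice.
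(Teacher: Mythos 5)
Your proof is correct, and it reorganizes the argument in a genuinely different way from the paper. The paper distributes $x_j^{\ell}$ across \emph{all} the factors $y_1,\dots,y_k$ (splitting into three cases depending on whether $j\in\supp(y_1)$ and whether $\ell\ge p_1+\cdots+p_k$), applies Lemma~\ref{freshmansdream} to each $(x_jy_i)^{p_i}$, and then argues explicitly that each resulting cross term vanishes; this produces the standard monomial $m_z$ by direct construction. You instead apply Lemma~\ref{freshmansdream} a single time to $(x_jy_1)^q$ with $q=\min(\ell,p_1)$, and then dispose of $N_z$ by an abstract straightening argument inside the Boolean interval $[\hat{0},z]$, anchored on the clean ``killing observation'' that if $a,b\preceq z$ and $w\in\mub(a,b)$ with $w\not\preceq z$, then $zw=0$. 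That observation is correct as you prove it (the Boolean join $a\vee_{[\hat{0},u]}b$ both lies below $w$ and has support inside $\supp(z)$, so minimality of $w$ forces $w\preceq z$), and it is really the same Boolean-uniqueness phenomenon the paper invokes in-line when showing the cross terms vanish; you have just isolated it as a reusable lemma. The payoff of your route is uniformity: there is no case split on $\ell$ versus $p_1+\cdots+p_k$, and once the killing observation is in hand the rest is the familiar unique-chain straightening in the face ring of a simplex (which guarantees a \emph{single} standard monomial, with $z$ as its top since $z^q$ with $q\ge 1$ is untouched by straightening and every other surviving factor is $\preceq z$). The paper's route buys an explicit description of $m_z$, which it in fact uses later; yours gives a shorter existence proof. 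One cosmetic point: in the statement of the killing observation you should require $a$ and $b$ incomparable so that $\mub(a,b)$ can have an element $w\not\preceq z$ at all, and you should note once that if $\mub(x_j,y_1)=\emptyset$ then both sides of the asserted identity are zero.
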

\begin{proof} Write $q_r=\sum_{i=1}^rp_i$ for $0\le r\le k$. We have three cases.
	
	\textit{Case 1: $j\not\in \supp(y_1)$ and $\ell \ge q_k$.} We can re-write $x_j^\ell m$ as
	\begin{equation}\label{xjlm}
	x_j^\ell m=\left(\prod_{i=1}^k(x_jy_i)^{p_i}\right)x_j^{\ell-q_k}.
	\end{equation}
	By the definition of $A_P$, 
	\[
	x_jy_1 = (x_j\wedge y_1)\sum_{\mathclap{z\in\mub(x_j, y_1)}}z\,\,\,=\,\,\,\sum_{\mathclap{z\in\mub(x_j, y_1)}}z.
	\]
	since $j\not\in\supp(y_1)$. If $\mub(x_j, y_1)=\emptyset$, then the entire expression (\ref{xjlm}) is zero in $A_P$. If $\mub(x_j, y_1)\not=\emptyset$, then for each $z\in \mub(x_j, y_1)$ let $z_i$ be the unique element of $[\hat{0}, z]$ with vertices consisting of those in $y_i$ along with $x_j$ (note $z=z_1$). By Lemma \ref{freshmansdream}, we have
	\[
	x_j^\ell m=\sum_{\mathclap{z\in \mub(x_j, y_1)}}\,\,\,\,\,\left(x_j^{\ell-q_k}\prod_{i=1}^kz_i^{p_i}\right)+\sum_{\mathclap{z\in \mub(x_j, y_1)}}\,\,\,\,\,\left(z^{p_1}x_j^{\ell-q_k}\prod_{i=2}^k\left(\hspace{25pt}\sum_{\mathclap{\substack{w_i\in \mub(x_j, y_i) \\  w_i\not= z_i}}}w_i^{p_i}\hspace{5pt}\right)\right).
	\]
	However, all terms in the second summand on the right are zero in $A_P$. Indeed, if $z$ and some $w_i\not= z_i$ were to have some upper bound $z'$, then both $z_i$ and $w_i$ would be a minimal upper bound for $x_j$ and $y_i$ in the boolean lattice $[\hat{0}, z']$, a contradiction.
	
	Writing $m_z$ for the standard monomial $z^{p_1}\left(\prod_{i=2}^kz_i^{p_i}\right)x_j^{\ell-q_k}$, we see that 
	\[
	(x_j)^\ell m = \sum_{z\in\mub(x_j, y_1)}m_z
	\]
	as desired.

	\textit{Case 2: $j\not\in \supp(y_1)$ and $\ell<q_k$.} Let $r$ be such that $q_r \le \ell$ while $q_{r+1} >\ell$. Now re-write $x_j^\ell m$ as
	\[
	x_j^\ell m=\left(\prod_{i=1}^{r}(x_jy_i)^{p_i}\right)(x_jy_{r+1})^{\ell - q_r}y_{r+1}^{p_{r+1}-(l-q_r)}\left(\prod_{i=r+2}^ky_i^{p_i}\right).
	\]
	After replacing the products $x_jy_i$ using the definition of $A_P$ and using the notation of Case 1, our ``cross terms'' cancel again and this simplifies to
	\[
	x_j^\ell m = \sum_{\mathclap{z\in\mub(x_j, y)}}\,\,\,\,\left(\left(\prod_{i=1}^rz_i^{p_i}\right)z_{r+1}^{\ell-q_r}y_{r+1}^{p_{r+1}-(\ell-q_r)}\left(\,\,\prod_{i=r+2}^ky_i^{p_i}\right)\right)
	\]
	Now if
	\[
	m_z=z^{p_1}\left(\prod_{i=2}^rz_i^{p_i}\right)z_{r+1}^{\ell-q_r}y_{r+1}^{p_{r+1}-(\ell-q_r)}\left(\,\,\prod_{i=r+2}^ky_i^{p_i}\right)
	\]
	then again we have
	\[
	(x_j)^\ell m = \sum_{z\in\mub(x_j, y_1)}m_z.
	\]
	
	\textit{Case 3: $j\in\supp(y_1)$.} If $j\in\supp(y_i)$ for all $i$, then $m(x_j)^\ell$ is already a standard monomial and we are done. So, suppose that $j\in\supp(y_i)$ for $1\le i\le r$ while $j\not\in\supp(y_i)$ for $i>r$. Then by Cases 1 and 2,
	\[
	(x_j^\ell)\prod_{i=r+1}^ky_i^{p_i}=\sum_{z\in\mub(x_j, y_{r+1})}m_z
	\]
	for some standard monomials $m_z$ with leading coefficient $z$. Note that there is one unique $z\in \mub(x_j, y_{r+1})$ such that $z\prec y_r$. Now write
	\[
	(x_j^\ell) m= \left(\prod_{i=1}^ry_i^{p_i}\right)m_z+\left(\prod_{i=1}^ry_i^{p_i}\right)\left(\sum_{\substack{z'\in\mub(x_j, y_{r+1}) \\ z'\not=z}}m_{z'}\right).
	\]
	The first term is a standard monomial with leading coefficient $y_1$, the unique element of $\mub(x_j, y)$, while all other terms are zero as in Case 1.
\end{proof}

\section{$\Ext$ modules}\label{sect:extmodules}

\subsection{Koszul complexes and an important basis}\label{sect:basis}
All of our results will crucially depend upon calculating the $A$-modules $\Ext_A^i(A/\mideal_{\ell}, A_P)$, where $\mideal_{\ell}$ is the ideal $(x_1^\ell, \ldots, x_n^\ell)$. Let $K_\bullet^\ell$ denote the Koszul complex of $A$ with respect to the sequence $(x_1^\ell, \ldots, x_n^\ell)$ (for the construction of $K_\bullet^\ell$ and some of its associated properties, the reader is referred to \cite[Section A.3]{MonomialIdeals}). We will view each $K_t^\ell$ as the direct sum
\[
K_t^\ell = \bigoplus_{1\le i_1<i_2<\cdots<i_t\le n}A(x_{i_1}^{\ell}\wedge x_{i_2}^{\ell}\wedge\cdots\wedge x_{i_t}^{\ell}).
\]
That is, a free $A$-module basis for $K_t^\ell$ is indexed by sets $F=\{i_1, \ldots, i_t\}$ with $1\le i_1<i_2<\cdots <i_t\le n$. Given such an $F$, define $\hat{x}_F^\ell$ to be the corresponding basis element $x_{i_1}^\ell\wedge x_{i_2}^\ell\wedge\cdots\wedge x_{i_t}^\ell$.

Since $K_\bullet^\ell$ provides a projective resolution for $A/\mideal_{\ell}$, we can calculate $\Ext_A^i(A/\mideal_{\ell}, A_P)$ as the homology of the chain complex $\Hom_A(K_\bullet^\ell, A_P)$. Using the $\ZZ^n$-grading of $A$ and $A_P$, this complex splits into a direct sum of chain complexes of the form $\Hom_A(K_\bullet^\ell, A_P)_\alpha$, where $\alpha\in\ZZ^n$ (it is easily verified that the differential preserves the graded pieces). Recall (Lemma \ref{ASLLemma}) that the standard monomials form a basis for $A_P$ as a vector space over $\field$. Then $\Hom_A(K_t^\ell, A_P)_{\alpha}$ has a basis consisting of homomorphisms $f_{F, m}$ defined by
\[
f_{F, m}(\hat{x}_G^\ell)=\left\{\begin{array}{c} m, \text{ if }  G = F, \\ 0, \text{ if }  G\not=F,\end{array}\right.
\]
where $m$ is a standard monomial that satisfies
\[
\deg(m)=\deg(\hat{x}_F^\ell)+\alpha.
\]

\subsection{The main theorem}
Having examined the structure of $A_P$ as well as that of $\Hom_A(K_\bullet^\ell, A_P)$, we are now prepared to prove the first of our main results: a topological description of the graded pieces of the $A$-modules $\Ext_A^i(A/\mideal_{\ell}, A_P)$.

\begin{theorem}\label{ExtTheorem}Let $P$ be a simplicial poset with vertex set $V$and let $\alpha\in \ZZ^n$. Set $B=\{i:-\ell<\alpha_i<0\}$, $C=\{i:\alpha_i=-\ell\}$, and $D=\{i:\alpha_i>0\}$. If $-\ell\le \alpha_i$ for all $i$, then
	\[
	\Ext_A^i(A/\mideal_\ell, A_P)_\alpha \cong \bigoplus_{\mathclap{\substack{\supp(z)=B\cup D}}} \widetilde{H}^{i-|\supp(\alpha^-)|-1}\left([\hat{0}, z_D]\times\lk_P(z)_{V\smallsetminus C}\right)
	\]
	and $\Ext_A^i(A/\mideal_\ell, A_P)_\alpha=0$ otherwise. In particular, if $D\not=\emptyset$ then $\Ext_A^i(A/\mideal_{\ell}, A_P)_\alpha = 0$.
\end{theorem}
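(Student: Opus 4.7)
The plan is to compute $\Ext_A^i(A/\mideal_\ell, A_P)_\alpha$ as the $i$-th cohomology of the complex $\Hom_A(K_\bullet^\ell, A_P)_\alpha$, working with the basis $\{f_{F, m}\}$ described in Section~\ref{sect:basis}. The non-negativity requirement $\deg m = \alpha + \ell \chi_F \ge 0$ (coordinate-wise) immediately handles the ``otherwise'' clause: if some $\alpha_i < -\ell$, no $F$ satisfies this condition and the whole complex vanishes. Assume henceforth that $\alpha_i \ge -\ell$ for all $i$. Then every admissible $F$ contains $B \cup C$; writing $F = B \sqcup C \sqcup T \sqcup S$ with $T \subseteq D$ and $S \subseteq V \setminus (B \cup C \cup D)$, one checks that $\supp(m) = B \cup D \cup S$, so the leading variable $y_1$ of $m$ has $\supp(y_1) = B \cup D \cup S$ and admits a canonical sub-face $z := (y_1)_{B \cup D}$ of support $B \cup D$.

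A crucial preliminary observation is that $m$ is uniquely determined by the pair $(F, y_1)$. Writing $m = y_1^{p_1} y_2^{p_2} \cdots y_k^{p_k}$ in standard form, each $y_j = (y_1)_{V_j}$ for a strictly decreasing chain $V_1 = \supp(y_1) \supsetneq V_2 \supsetneq \cdots \supsetneq V_k$ of supports. The degree equation $\beta_i := (\alpha + \ell \chi_F)_i = p_1 + \sum_{j \ge 2,\, i \in V_j} p_j$ forces two indices $i, i' \in V_1$ to lie in the same layer $V_j \setminus V_{j+1}$ exactly when $\beta_i = \beta_{i'}$; hence the chain $(V_j)$, the sub-faces $y_j$ (by the Boolean property of $[\hat{0}, y_1]$), and the multiplicities $p_j$ are all reconstructed uniquely from $\beta$. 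Thus the basis of $\Hom_A(K_i^\ell, A_P)_\alpha$ is canonically indexed by pairs $(T, y_1)$ with $T \subseteq D$ and $y_1 \in P$ satisfying $B \cup D \subseteq \supp(y_1) \subseteq V \setminus C$.

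Next, I apply Lemma~\ref{prodOfMonomials} to the Koszul differential $\delta f_{F, m} = \sum_{j \notin F} \pm f_{F \cup \{j\},\, \text{std.\ form of } x_j^\ell m}$. When $j \in D \setminus T$, Case~3 of the lemma applies (since $D \subseteq \supp(y_1)$) and $y_1$ is preserved while $T$ grows by $\{j\}$; when $j \in V \setminus (B \cup C \cup D \cup S)$, Cases~1 and 2 apply and $y_1$ is replaced by the sum over $\mub(x_j, y_1)$, whose elements all have support $\supp(y_1) \cup \{j\}$. In every case $z = (y_1)_{B \cup D}$ is preserved, so the complex splits as a direct sum over $z \in P$ with $\supp(z) = B \cup D$. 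The two directions of the differential affect independent pieces of data (the first modifies $T$ only, the second modifies $(S, y_1)$ only), so for fixed $z$ the corresponding summand is a double complex, identified with the tensor product of the augmented cochain complex of $[\hat{0}, z_D]$ and that of $\lk_P(z)_{V \setminus C}$.

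The K\"unneth theorem, combined with $\Delta(\overline{[\hat{0}, z_D] \times \lk_P(z)_{V \setminus C}}) \cong \Delta(\overline{[\hat{0}, z_D]}) * \Delta(\overline{\lk_P(z)_{V \setminus C}})$, then yields the desired reduced cohomology, with the shift $i \mapsto i - |B| - |C| - 1$ tracking $|F| = |B| + |C| + |T| + |S|$ together with the join-suspension. The ``in particular'' statement is automatic from the formula: when $D \ne \emptyset$, the order complex $\Delta(\overline{[\hat{0}, z_D]})$ is a simplex (hence contractible), so the join is contractible and its reduced cohomology vanishes. The main obstacle I foresee is the careful sign and orientation analysis in matching the Koszul-induced signs with the combinatorial coboundary of Section~2: the coboundary uses $P$'s atoms $x_i$ via $\mub(x_i, y)$, while the atoms of $\lk_P(z)$ viewed inside $P$ are instead certain minimal upper bounds of $z$ with the $x_i$'s, so the identification with the cochain complex of $\lk_P(z)_{V \setminus C}$ requires either a consistent choice of orientation $\mathcal{O}$ compatible across the decomposition or an appeal to the invariance of cohomology under the change of combinatorial model.
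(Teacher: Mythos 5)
Your proposal is correct and follows essentially the same route as the paper's proof: both compute $\Ext$ via the Koszul $\Hom$-complex with the basis $\{f_{F,m}\}$, parameterize basis elements by the leading face of $m$ together with the subset $T=F\cap D$, invoke Lemma~\ref{prodOfMonomials} to identify the differential, split by the face $z$ of support $B\cup D$, and reduce to the (co)homology of $[\hat{0},z_D]\times\lk_P(z)_{V\smallsetminus C}$. The only real difference is presentational: you organize the fixed-$z$ summand as a double complex/tensor product and finish by K\"unneth together with the join identity for order complexes, whereas the paper directly exhibits a chain isomorphism $\varphi$ onto $\widetilde{C}^\bullet([\hat{0},z_D]\times\lk_P(z)_{V\smallsetminus C})$; both framings face exactly the sign/orientation bookkeeping you flag at the end, which the paper resolves by the explicit choices of orderings $\mathcal{O}_1$, $\mathcal{O}_2$, $\mathcal{O}_3$ (and the normalization $D=\{1,\ldots,r\}$, $C=\{s,\ldots,n\}$), so your foreseen obstacle is genuinely the remaining content and cannot be dismissed by a bare appeal to topological invariance if you also want the explicit chain-level map used later in Theorem~\ref{CohomologyStructureTheorem}.
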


\begin{proof} Fix $\alpha$, and without loss of generality assume that
	\begin{equation}\label{assumptions}
	D = \{1, \ldots, r\}\hspace{15pt}\text{and}\hspace{15pt}C=\{s, \ldots, n\}
	\end{equation}
	for some $r$ and $s$. Let $K_\bullet^\ell$ denote the Koszul complex of $A$ with respect to the sequence $(x_1^\ell, \ldots, x_n^\ell)$. As in Section \ref{sect:basis}, we will compute $\Ext_A^i(A/\mideal_{\ell}, A_P)_\alpha$ by computing the homology of the chain complex $\Hom_A(K_\bullet^\ell, A_P)_\alpha$.
	
	Suppose first that $\alpha_i<-\ell$ for some $i$. Then there cannot exist $F=\{i_1, \ldots, i_t\}$ and a standard monomial $m$ satisfying
	\[
	\deg(m)=\deg(\hat{x}_F^\ell)+\alpha.
	\]
	That is, $\Hom_A(K_t^\ell, A_P)_\alpha=0$ and hence $\Ext_A^t(A/\mideal^\ell, A_P)_\alpha=0$. Assume from now on that $\alpha_i\ge -\ell$ for all $i$.

	If $f_{F, m}$ is a basis element of $\Hom_A(K^\ell_t, A_P)_\alpha$, let $y$ be the leading variable of $m$ and set $w=y_{D\cap F}$. Then we can consider $f_{F, m}$ as an element of the poset
	\[
	[\hat{0}, y_D]\times\lk_P(y_{B\cup D})_{V\smallsetminus C}
	\]
	where $f_{F, m}$ corresponds to the element $(w, y)$, since $B\cup D\subset \supp(y)$ and $\supp(y)\cap C = \emptyset$. Note that the rank of $(w, y)$ in this poset is given by
	\begin{align*}
		\rk(w, y)&=\rk_{[\hat{0}, y_D]}(w)+\rk_{\lk_P(y_{B\cup D})}(y)\\
		&=|\supp(w)|+|\supp(y)|-|B|-|D|\\
		&=|D\cap F|+|F\smallsetminus C|+|D\smallsetminus F|-|B|-|D|\\
		&=|D\cap F|+t-|\supp(\alpha^-)|-|D\cap F|\\
		&=t-|\supp(\alpha^-)|,
	\end{align*}
	where the fourth line follows because $|F\smallsetminus C|-|B|=t-|\supp(\alpha^-)|$ (recall that $|F|=t$) and $|D\smallsetminus F|-|D|=-|D\cap F|$. So, we can define a map
	\[
	\varphi: \Hom_A(K^\ell_t, A_P)_\alpha\to \bigoplus_{\mathclap{\substack{\supp(z)=B\cup D}}} \widetilde{C}^{t-|\supp(\alpha^-)|-1}\left(\left[\hat{0}, z_D\right]\times\lk_P(z)_{V\smallsetminus C}\right)
	\]
	by $\varphi(f_{F, m})=(w, y)$ under the correspondence above.
	
	To prove that $\varphi$ is actually an isomorphism of vector spaces, we would like to construct an inverse. Given some fixed $z$ with $\supp(z)=B\cup D$, let $(w, y)$ be an element of $[0, z_D]\times\lk_P(z)_{V\smallsetminus C}$ of rank $t-\supp(\alpha^-)$. First, set
	\[
		F = C\cup\left(\supp(y)\smallsetminus (D\smallsetminus \supp(w))\right).
	\]
	Then it is immediate that
	\[
		y_{F\cap D}=y_{\supp(w)}=w.
	\]
	Furthermore, since $\rk(w, y)=t-|\supp(\alpha^-)|=t-|B|-|C|$, we have
	\begin{align*}
		\rk(w, y)&=\rk_{[\hat{0}, y_D]}(w)+\rk_{\lk_P(y_{B\cup D})}(y)\\
		t-|B|-|C|&=|\supp(w)|+|\supp(y)|-|B|-|D|\\
		t&=|C|+|\supp(y)|-(|D|-|\supp(w)|),
	\end{align*}
	from which it follows that $|F|=t$. So, if we can construct a monomial $m$ such that the leading variable of $m$ is $y$ while $\deg(m)=\deg(\hat{x}_F^\ell)+\alpha$, then we will have constructed the desired inverse.
	
	Let $\deg(\hat{x}_F^\ell)+\alpha = \delta = (\delta_1, \ldots, \delta_n)$, and define $p_0=\min\{\delta_i:\delta_i>0\}$ and $S_0=\{i: \delta_i\ge p_0\}$. If $j>0$ and $p_{j-1}\not=\max\{\delta_i:1\le i\le n\}$, then define
	\[
	p_j = \min\{\delta_i:\delta_i>p_{j-1}\} \hspace{10pt}\text{ and } \hspace{10pt} S_j = \{i:\delta_i \ge p_j\}.
	\]
	Eventually, this process terminates at some point and $p_\ell=\max\{\delta_i:1\le i\le n\}$ for some $\ell\ge 0$. Now define
	\[
	m=y^{p_0}\prod_{i=1}^\ell y_{S_i}^{p_i-p_{i-1}}.
	\]
	We will verify that $\deg(m)=\delta$. Fix $i$. Then $\delta_i=p_j$ for some $j$, which means that $i\in S_k$ for $k\le j$ and $i\not\in S_k$ for $k> j$. Then $x_i\preceq y_{S_k}$ for $k\le j$, while $x_i\not\preceq y_{S_k}$ for $k> j$, so $\deg(y_{S_k})_i=1$ for $k\le j$ and $\deg(y_{S_k})=0$ for $k>j$. Hence,
	\[
	\deg(m)_i=p_0\deg(y)_i+\sum_{m=1}^\ell(p_m-p_{m-1})\deg(y_{S_m})_i=p_0+\sum_{m=1}^j(p_m-p_{m-1})=p_j.
	\]
	We have now shown that $f_{F, m}\in \Hom_A(K^\ell_t, A_P)_\alpha$, and it is clear by construction that $\varphi(f_{F, m})=(w, y)$. Hence, $\varphi$ is a vector space isomorphism.
	
	It remains to show that $\varphi$ provides an isomorphism of chain complexes by verifying that the differentials in each complex are preserved under $\varphi$.	Let $f_{F, m}$ be a basis element in $\Hom_A(K^\ell_\bullet, A_P)_\alpha$ with $m=y^p\prod_{i=1}^ky_i^{p_i}$. The differential on $\Hom_A(K^\ell_\bullet, A_P)_\alpha$ sends the homomorphism $f_{F, m}$ to a homomorphism $df_{F, m}$ that maps $\hat{x}_{F\cup\{j\}}\in K_{t+1}^\ell$ to $(-1)^{u_j}x_j^\ell m\in A_P$ for any $j\not\in F$, where $u_j = |\{i\in F:i<j\}|$. By Lemma \ref{prodOfMonomials}, we can write each $(-1)^{u_j}x_j^\ell m$ as
	\[
	(-1)^{u_j}x_j^\ell m = (-1)^{u_j}\sum_{\mathclap{z\in\mub(x_j, y)}}\,\,\,\, m_z,
	\]
	where $m_z$ is a standard monomial with leading variable $z$. Hence, each value of $j$ contributes to $df_{F, m}$ a term of the form
	\[
	\left((-1)^{u_j}\sum_{\mathclap{z\in \mub(x_j, y)}}f_{F\cup\{j\}, m_z}\right),
	\]
	where $m_z$ is a standard monomial with leading variable $z$. Then we may write $d(f_{F, m})$ as
	\begin{align*}
	d(f_{F, m})&=\sum_{j\not\in F}\left((-1)^{u_j}\sum_{\mathclap{z\in \mub(x_j, y)}}f_{F\cup\{j\}, m_z}\right) \\
	&=\sum_{j\in (D\smallsetminus F)}\left((-1)^{u_j}\sum_{\mathclap{z\in \mub(x_j, y)}}f_{F\cup\{j\}, m_z}\right)+\sum_{j\not\in (D\cup F)}\left((-1)^{u_j}\sum_{\mathclap{z\in \mub(x_j, y)}}f_{F\cup\{j\}, m_z}\right).
	\end{align*}
	Choose $z\in \mub(x_j, y)$. If $j\in D$, then $x_j\preceq y$ already because $D\subset \supp(y)$ and thus $z=y$. Furthermore, $z_{D\cap F}=y_{D\cap F}$ regardless of the value of $j$ and $z_{(D\cap F)\cup\{j\}}=y_{(D\cap F)\cup\{j\}}$ if $j\in D$. Hence, applying $\varphi$ to this expression allows for the following simplifications:
	\begin{align}\label{differentialfirst}
	(\varphi\circ d)(f_{F, m})&=\sum_{j\in (D\smallsetminus F)}\left((-1)^{u_j}\sum_{\mathclap{z\in\mub(x_j, y)}}(z_{(D\cap F)\cup\{j\}}, z)\right)+\sum_{j\not\in (D\cup F)}\left((-1)^{u_j}\sum_{\mathclap{z\in\mub(x_j, y)}}(z_{D\cap F}, z)\right)\nonumber \\
	&= \sum_{j\in (D\smallsetminus F)}(-1)^{u_j}(y_{(D\cap F)\cup\{j\}}, y)+\sum_{j\not\in (D\cup F)}\left((-1)^{u_j}\sum_{\mathclap{z\in\mub(x_j, y)}}(y_{D\cap F}, z)\right).
	\end{align}

	Our final step is to show that the differential on $(y_{D\cap F}, y)$ in $\widetilde{C}^\bullet([\hat{0}, y_D]\times\lk_P(y_{B\cup D})_{V\smallsetminus C})$ acts the same way. First note that if $w$ is an atom of $\lk_P(y_{B\cup D})_{V\smallsetminus C}$, then $w$ is an element of $\mub(x_j, y_{B\cup D})$ for some $j\not\in C$. Arbitrarily label the elements of $\mub(x_j, y_{B\cup D})$ by $x_{j^1}, x_{j^2}, \ldots, x_{j^{|\mub(x_j, y_{B\cup D})|}}$ for each $j$. Then by definition, we can write
	\begin{align}\label{mapfirst}
	(d\circ\varphi)(f_{F, m})&=\sum_{\mathclap{{j\in (D\smallsetminus F)}}}(-1)^{v_j}(y_{(D\cap F)\cup\{j\}}, y)+\sum_{\mathclap{\substack{j\not\in\supp(y)\\ j\not \in C}}}\,\,\,\,\,\,\,\sum_{k=1}^{\mathclap{{|\mub(x_j, y_{B\cup D})|}}}\,\,\,\,\,\,\,\,\left((-1)^{w_{j^k}}\sum_{\mathclap{z\in \mub(x_{j^k}, y)}}(y_{D\cap F}, z)\right).
	\end{align}
	where $v_j$ and $w_{j^i}$ depend upon some choice of orientations for $[\hat{0}, y_D]$ and $\lk_P(y_{B\cup D})_{V\smallsetminus C}$, respectively. Since $\supp(y)\cup C = D \cup F$, the right-hand sums in (\ref{differentialfirst}) and (\ref{mapfirst}) run over the same values of $j$. On the other hand,
	\[
	\bigcup_{j=1}^n M(x_j, y)=\bigcup_{j=1}^n\left(\bigcup_{k=1}^{|\mub(x_j, y)|} M(x_{j^k}, y)\right),
	\]
	so if the characteristic of $\field$ is $2$ then we are done. The construction of consistent orientations for fields of other characteristics is tedious, but unavoidable. If $\mathcal{O}_1$ is an ordering of the vertices in $[\hat{0}, y_D]$ and $\mathcal{O}_2$ is an ordering of the vertices in $\lk_P(y_{B\cup D})_{V\smallsetminus C}$, then
	\[
	v_j = |\{i\in \supp(y_{D\cap F}):i<_{\mathcal{O}_1}j\}|
	\]
	and
	\[
	w_{j^k}=|\{i\in \supp(y): i<_{\mathcal{O}_2} j^k\}|.
	\]
	In $[\hat{0}, y_D]$ we will choose $\mathcal{O}_1$ to be the natural ordering of the vertices. To show that $u_j=v_j$ in the left sums of (\ref{differentialfirst}) and (\ref{mapfirst}), first choose $j\in D\smallsetminus F$. If $i\in D\cap F$ satisfies $i<_{\mathcal{O}_1} j$, then $i<j$ in the natural order and $i\in F$. On the other hand, if $i\in F$ satisfies $i<j$, then $i\in D$ by our assumption (\ref{assumptions}) because $j\in D$. Hence, $u_j=v_j$ as desired.

	We now wish to construct an ordering of the $j^i$'s such that $w_{j^i}=w_{j^{k}}$ for any pair $i, k$ and such that $u_j=w_{j^i}$ for all $j\not\in(D\cup F)$. Once this is done, we will have that 
	\[
	\sum_{\substack{j\not\in\supp(y)\\ j\not \in C}}\sum_{i=1}^{|\mub(x_j, y)|}\left((-1)^{w_{j^i}}\sum_{\mathclap{z\in \mub(x_{j^i}, y)}}(y_{D\cap F}, z)\right)=
	\sum_{\substack{j\not\in\supp(y)\\ j\not \in C}}\left((-1)^{u_{j}}\sum_{\mathclap{z\in \mub(x_{j}, y)}}(y_{D\cap F}, z)\right)
	\]
	as desired. We choose the partial ordering $\mathcal{O}_2$ of the vertices of $\lk_P(y_{B\cup D})_{V\smallsetminus C}$ as follows. First, order the indices of the the vertices of $V\smallsetminus C$ as
	\begin{equation*}
	r+1<r+2<\cdots<s-1<1<2<\cdots<n,
	\end{equation*}
	and call this order $\mathcal{O}_3$. Next order the vertices of $\lk_Py$ such that $(j_1)^{i_1}<(j_2)^{i_2}$ if $j_1<j_2$ under the order $\mathcal{O}_3$. In $P$, we have $\supp(y)=\{j_1, j_2, \ldots, j_{|\supp(y)|}\}$ for some $j_i$'s with $j_i\in (V\smallsetminus C)$. Then in $\lk_P(y_{B\cup D})$, we have $\supp(y)=\{j_1^{i_1}, j_2^{i_2}, \ldots, j_{|\supp(y)|}^{i_{|\supp(y)|}}\}$ for some $j_k^{i_k}$'s with $j_k\in(V\smallsetminus C)$. In particular, no pair $j_{k_1}^{i_{k_1}}, j_{k_2}^{i_{k_2}}\in\supp(y)$ has $k_1=k_2$. Hence, the restriction of $\mathcal{O}_2$ to any face of $\lk_P(y_{B\cup D})_{V\smallsetminus C}$ is a linear order and $\mathcal{O}_2$ provides a valid orientation for $\lk_P(y_{B\cup D})_{V\smallsetminus C}$. Under this orientation,
	\[
	w_{j^i}=|\{k^m\in\supp(y): k^m<_{\mathcal{O}_2}j^i\}|=|\{k^m\in \supp(y):k<_{\mathcal{O}_3} j\}|.
	\]

	Now choose $j\not\in D\cup F$. If $i\in F$ is such that $i<j$, then $i$ cannot be in $C$ on account of our assumption (\ref{assumptions}) and thus $i\in \supp(y)$ in $P$. Then in $\lk_P(y_{B\cup D})_{V\smallsetminus C}$, $i^k\in\supp(y)$ for some single value $k$. Furthermore, $i<_{\mathcal{O}_3} j$, so $i^k<_{\mathcal{O}_2} j^m$ for all $m$. That is, $u_j\le w_{j^m}$ for all $m$.
	
	On the other hand, if $i^k\in \supp(y)$ (in $\lk_P(y_{B\cup D})_{V\smallsetminus C}$) is such that $i^k<_{\mathcal{O}_2}j^m$, then $i<_{\mathcal{O}_3} j$ and hence $i$ cannot be in $D$. Then $i\in F$, and $i<j$ in the natural order. Hence, $w_j^m\le u_j$ for all $m$, so $w_j^m=u_j$ for all $j$ and $m$ and $\varphi$ provides an isomorphism of chain complexes.

	Lastly, if $D\not=\emptyset$, then the CW complex corresponding to $[\hat{0}, z_D]\times \lk_P(z)_{V\smallsetminus C}$ is the join of a simplex with the CW complex corresponding to $\lk_P(z)_{V\smallsetminus C}$ (\cite[Section 3]{Bjorner}). In particular, it is contractible; hence, $\Ext_A^i(A/\mideal_{\ell}, A_P)_\alpha = 0$.
\end{proof}

\section{Local cohomology modules}\label{sect:localcohomology}

\subsection{Reproving Duval's result}

Let $\mideal=(x_1, \ldots, x_n)$ denote the irrelevant ideal of $A$. From Theorem \ref{ExtTheorem} we can quickly obtain the dimensions of the graded pieces of the local cohomology modules $H_\mideal^i(A_P)$, a calculation originally completed by Duval in \cite[Theorem 5.9]{Duval}. Here we will reprove the result using the techniques employed by Miyazaki in the simplicial complex case in \cite{Characterizations}, allowing for an important corollary.

\begin{theorem}\label{localCohomology}
	If $P$ is a simplicial poset and $\alpha\in \ZZ^n$ is such that $\alpha_i\le 0$ for all $i$ then
	\begin{equation}
	H_\mideal^i(A_P)_\alpha\cong \bigoplus_{\mathclap{\supp(w)=\supp(\alpha)}} \widetilde{H}^{i-|\supp(\alpha)|-1}\left(\lk_P(w)\right),
	\end{equation}
	and $H_\mideal^i(A_P)_\alpha=0$ otherwise.
\end{theorem}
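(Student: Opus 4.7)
The plan is to derive Theorem \ref{localCohomology} from Theorem \ref{ExtTheorem} by using the standard identification
\[
H_\mideal^i(A_P) \cong \varinjlim_{\ell} \Ext_A^i(A/\mideal_\ell, A_P),
\]
where the direct system has transition maps induced by the natural surjections $A/\mideal_{\ell+1} \twoheadrightarrow A/\mideal_{\ell}$. Since direct limits commute with taking a fixed $\ZZ^n$-graded piece, I can work one $\alpha$ at a time.

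First I would dispatch the vanishing. Fix $\alpha \in \ZZ^n$ with some coordinate $\alpha_j > 0$. For $\ell < -\min_i \alpha_i$ some coordinate satisfies $\alpha_i < -\ell$ and Theorem \ref{ExtTheorem} gives $\Ext_A^i(A/\mideal_\ell, A_P)_\alpha = 0$; for larger $\ell$ the set $D = \{i : \alpha_i > 0\}$ is nonempty, so Theorem \ref{ExtTheorem} again forces the Ext module to vanish. Consequently the whole direct system is zero and $H_\mideal^i(A_P)_\alpha = 0$, confirming the ``otherwise'' case.

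Next, assume $\alpha \in \ZZ_{\le 0}^n$ and choose $\ell_0 > \max_i |\alpha_i|$. For every $\ell \ge \ell_0$, the partition in Theorem \ref{ExtTheorem} becomes $D = \emptyset$, $C = \emptyset$, and $B = \supp(\alpha)$. The interval $[\hat{0}, z_D] = \{\hat{0}\}$ is a point, the restriction $\lk_P(z)_{V \smallsetminus C}$ is just $\lk_P(z)$, and $|\supp(\alpha^-)| = |\supp(\alpha)|$, so Theorem \ref{ExtTheorem} collapses to
\[
\Ext_A^i(A/\mideal_\ell, A_P)_\alpha \cong \bigoplus_{\mathclap{\supp(z) = \supp(\alpha)}} \widetilde{H}^{i - |\supp(\alpha)| - 1}(\lk_P(z)),
\]
which is the claimed right-hand side. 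It remains only to check that, under this identification, the transition maps in the direct system are isomorphisms for $\ell \ge \ell_0$, at which point passing to the limit yields the theorem.

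The main obstacle is verifying exactly this last point: tracing the transition map through the explicit Koszul-theoretic basis of Section \ref{sect:basis}. Concretely, a basis element $f_{F,m}$ computing $\Ext$ at level $\ell$ must be compared with its image among the $f_{F,m'}$ computing $\Ext$ at level $\ell+1$. The relevant observation is that on cocycles, the transition map sends $f_{F,m}$ to $f_{F, m \cdot \prod_{i \in F} x_i}$ (up to sign), simply replacing the ``witness'' $x_i^{\ell}$ by $x_i^{\ell+1}$ in each slot $i \in F$. Under the bijection used in the proof of Theorem \ref{ExtTheorem}, this adjustment does not change the leading variable $y$ of $m$ and does not change $y_{D \cap F}$; hence for $\ell, \ell+1 \ge \ell_0$ (so $C = \emptyset$ on both sides) it induces the identity on the combinatorial model $\bigoplus_{\supp(z) = \supp(\alpha)} \widetilde{C}^{\bullet - |\supp(\alpha)| - 1}(\lk_P(z))$. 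Once this compatibility is in place, the direct limit reduces to the stable value, finishing the proof. Everything else (the standard limit formula for local cohomology, the isomorphism from Theorem \ref{ExtTheorem}) is already available.
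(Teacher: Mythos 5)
Your proposal is correct and follows essentially the same route as the paper: compute $H_\mideal^i(A_P)_\alpha$ as a direct limit of $\Ext_A^i(A/\mideal_\ell, A_P)_\alpha$, note the vanishing when $D\neq\emptyset$, specialize Theorem \ref{ExtTheorem} for $\ell$ large enough that $B=\supp(\alpha)$ and $C=D=\emptyset$, and then check that the transition maps are identities on the combinatorial model by tracing the Koszul chain map $\hat{x}_F^{\ell+1}\mapsto(\prod_{i\in F}x_i)\hat{x}_F^\ell$ through the basis $f_{F,m}$ and using Lemma \ref{prodOfMonomials} to see that multiplying $m$ by $\prod_{i\in F}x_i$ (all $i\in F$ lying in $\supp(y)$ once $\ell>\max_i|\alpha_i|$) leaves the leading variable unchanged. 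The only minor cosmetic remarks: the first half of your vanishing argument (the ``small $\ell$'' case) is unnecessary since $D\neq\emptyset$ alone kills every term of the direct system, and the parenthetical ``up to sign'' in the description of the transition map is not needed since the standard lift of the projection between Koszul complexes carries no sign.
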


\begin{proof}
	Since the chain of ideals $\mideal, \mideal_2, \mideal_3, \ldots$ is cofinal with the chain $\mideal, \mideal^2, \mideal^3, \ldots$ (that is, for all $p$, there exists $q$ and $r$ such that $\mideal^r\subseteq \mideal_p$ and $\mideal_s\subseteq \mideal^p$), we can compute $H_\mideal^i(A_P)_\alpha$ as the direct limit of the modules $\Ext_A^i(A/\mideal_{\ell}, A_P)_\alpha$ (see \cite[Chapter 7]{24Hours}). Since $\Ext_A^i(A/\mideal_{\ell}, A_P)_\alpha=0$ for all $\ell$ if $\alpha_i>0$ for some $i$, it follows that $H_\mideal^i(A_P)_\alpha=0$ if $\alpha_i>0$ for some $i$.
	
	Suppose now that $\alpha_i\le 0$ for all $i$, and choose $\ell$ such that $1-\ell\le \alpha_i$ for all $i$. Then it suffices to show that the following diagram commutes, in which the diagonal maps are the isomorphisms of Theorem \ref{ExtTheorem} and $\widetilde{\pi}$ is induced by the canonical projection $\pi:A/\mideal_{\ell+1}\to A/\mideal_\ell$:
	\[
	\begin{tikzcd}
	\Ext_A^i(A/\mideal_\ell, A_P)_\alpha \arrow[rr, "\widetilde{\pi}"] \arrow[rd] & & \Ext_A^i(A/\mideal_{\ell+1}, A_P)_\alpha \arrow[ld]\\
	&\bigoplus_{\mathclap{\substack{\, \\ \, \\ \supp(w)=\supp(\alpha)}}} \widetilde{H}^{i-|\supp(\alpha)|-1}\left(\lk_P(w)\right).
	\end{tikzcd}
	\]
	To begin, define a map $\hat{\pi}:K^{\ell+1}_\bullet\to K^{\ell}_\bullet$ by
	\[
	\hat{\pi}(x_{i_1}^{\ell+1}\wedge\cdots\wedge x_{i_t}^{\ell+1})=(x_{i_1}\cdots x_{i_t})(x_{i_1}^\ell\wedge\cdots\wedge x_{i_t}^\ell).
	\]
	Then the following diagram of projective resolutions commutes
	\[
	\begin{tikzcd}
	K^{\ell+1}_{\bullet} \arrow[r] \arrow[d, "\hat{\pi}"] & A/\mideal_{\ell+1}\arrow[d, "\pi"] \arrow[r] & 0\\
	K^\ell_\bullet \arrow[r] & A/\mideal_\ell \arrow[r] & 0,
	\end{tikzcd}
	\]
	so dualizing produces another commutative diagram:
	\[
	\begin{tikzcd}
	\Hom_A(K^{\ell+1}_{\bullet}, A_P)_\alpha & \Hom_A(A/\mideal_{\ell+1}, A_P)_\alpha   \arrow[l] & 0  \arrow[l]\\
	\Hom_A(K^\ell_\bullet, A_P)_\alpha  \arrow[u, "\hat{\pi}^*"] & \Hom_A(A/\mideal_\ell, A_P)_\alpha \arrow[l] \arrow[u, "\pi^*"] & 0. \arrow[l]
	\end{tikzcd}
	\]
	When taking homology to compute $\Ext_A^i (A/\mideal_\ell, A_P)_\alpha$ and $\Ext_A^i (A/\mideal_{\ell+1}, A_P)_\alpha$, the map $\hat{\pi}^*$ may be used to compute $\widetilde{\pi}$. That is, we only need to check that the following diagram commutes:
	\begin{equation}\label{commutativeChainDiagram}
	\begin{tikzcd}
	\Hom_A(K^\ell_t, A_P)_\alpha \arrow[rr, "\hat{\pi}^*"] \arrow[rd] & & \Hom_A(K_t^{\ell+1}, A_P)_\alpha \arrow[ld]\\
	&\bigoplus_{\mathclap{\substack{\, \\ \, \\ \supp(w)=\supp(\alpha)}}} \widetilde{C}^{t-|\supp(\alpha)|-1}\left(\lk_P(w)\right).
	\end{tikzcd}
	\end{equation}
	Let $f_{F, m}$ be a basis element for $\Hom_A(K_t^\ell, A_P)_\alpha$, where $F=\{i_1, \ldots, i_t\}$ and $m=y^p\prod_{i=1}^k y_i^{p_i}$ with $y\succ y_1\succ y_2\succ\cdots\succ y_k$. Then $f_{F, m}$ corresponds to the face $y$ in $\widetilde{C}^{t-|\supp(\alpha)|-1}(\lk_P(y_{\supp(\alpha)}))$ under the map $\varphi$ of Theorem \ref{ExtTheorem}.
	
	On the other hand, $f_{F, m}\circ\hat{\pi}$ is the homomorphism that sends $\hat{x}_F^{\ell+1}$ to $(x_{i_1}\cdots x_{i_t})m$ and all other basis elements of $K_t^{\ell+1}$ to zero. Note that $x_{i_j}\in\supp(y)$ for $1\le j \le t$ because $1-\ell\le \alpha_i\le 0$ for all $i$. By iterating Lemma \ref{prodOfMonomials} $t$ times, we have $(x_{i_1}\cdots x_{i_t})m=m'$ where $m'$ is a standard monomial in $A_P$ with leading variable $y$. Hence, $\varphi(f_{F, m})=(\varphi\circ\hat{\pi}^*)(f_{F, m})$ and Diagram (\ref{commutativeChainDiagram}) commutes.
\end{proof}

The proof of Theorem \ref{localCohomology} immediately provides the following corollary, which we will use in Section \ref{sect:characterizing} to characterize Buchsbaum simplicial posets.
\begin{corollary}\label{canonicalMapIso}
	If $\alpha\in \ZZ^n$ is such that $1-\ell\le \alpha_i$ for all $i$, then the canonical map
	\[
	\Ext_A^i(A/\mideal_\ell, A_P)_\alpha \to H_\mideal^i(A_P)_\alpha
	\]
	is an isomorphism for all $i$.
\end{corollary}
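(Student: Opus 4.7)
The plan is to deduce this directly from what is already established in the proof of Theorem \ref{localCohomology}. Recall that since the ideals $\mideal_{\ell'}$ are cofinal with the powers $\mideal^{\ell'}$, we have $H_\mideal^i(A_P)_\alpha = \varinjlim_{\ell'} \Ext_A^i(A/\mideal_{\ell'}, A_P)_\alpha$, and the canonical map in the statement is simply the structure map of this direct system at level $\ell$. So it suffices to show that, under the hypothesis $1-\ell\le \alpha_i$, every transition map $\widetilde{\pi}:\Ext_A^i(A/\mideal_{\ell'}, A_P)_\alpha\to \Ext_A^i(A/\mideal_{\ell'+1}, A_P)_\alpha$ for $\ell'\ge \ell$ is an isomorphism.

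First I would dispose of the case where $\alpha_j>0$ for some $j$. In that case $D\neq\emptyset$, so by the final clause of Theorem \ref{ExtTheorem} we have $\Ext_A^i(A/\mideal_{\ell'}, A_P)_\alpha=0$ for every $\ell'\ge \ell$, and $H_\mideal^i(A_P)_\alpha=0$ by Theorem \ref{localCohomology}; the map is trivially an isomorphism.

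Now assume $\alpha_i\le 0$ for every $i$. Since the hypothesis $1-\ell'\le \alpha_i$ is preserved as $\ell'$ increases, the set $C=\{i:\alpha_i=-\ell'\}$ appearing in Theorem \ref{ExtTheorem} is empty at every level $\ell'\ge \ell$, so the $\varphi$-isomorphism of Theorem \ref{ExtTheorem} identifies each $\Ext_A^i(A/\mideal_{\ell'}, A_P)_\alpha$ with the same vector space $\bigoplus_{\supp(w)=\supp(\alpha)} \widetilde{H}^{i-|\supp(\alpha)|-1}(\lk_P(w))$. The commutative triangle established in the proof of Theorem \ref{localCohomology} (Diagram \eqref{commutativeChainDiagram}, passed to cohomology) shows that under these identifications $\widetilde{\pi}$ is the identity. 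Consequently every transition map is an isomorphism and the canonical map into the direct limit is as well.

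There is really no obstacle here: the corollary amounts to observing that the argument already given at a single level $\ell$ in Theorem \ref{localCohomology} applies verbatim at every subsequent level, since the constraint $1-\ell'\le \alpha_i$ continues to hold. The only point worth spelling out is the separate treatment of coordinates $\alpha_j>0$, where both sides vanish for a combinatorial rather than a direct-limit reason.
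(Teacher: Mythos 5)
Your proposal is correct and matches the paper's intent: the paper gives no separate argument for the corollary, stating only that it follows immediately from the proof of Theorem \ref{localCohomology}, and what you write out is precisely that argument made explicit. The only additions are the (correct) observation that $1-\ell'\le\alpha_i$ persists for $\ell'\ge\ell$ so every transition map is an isomorphism, and the short separate treatment of coordinates with $\alpha_j>0$, both of which are implicit in the paper's proof of the theorem.
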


\subsection{The $A$-module structure of $H_\mideal^i(A_P)$}
The simplicial complex analog of Theorem \ref{localCohomology} counting the dimensions of the local cohomology modules $H_\mideal^i(\field[\Delta])$ of a Stanley--Reisner ring $\field[\Delta]$ was proved first by Hochster (\cite[Section II.4]{Stanley}). Later, Gr\"{a}be (\cite[Theorem 2]{Grabe}) examined the graded module structure of $H_\mideal^i(\field[\Delta])$. Here we will prove a similar result for $H_\mideal^i(A_P)$. First, we will need the following simplicial poset analog of \cite[Lemma, p.273]{Grabe}.
	\begin{lemma}\label{lkCost}
		Let $P$ be a simplicial poset and let $y\in P$. Then
		\[
		\widetilde{H}^{i-|\supp(y)|-1}(\lk_P(y))\cong H^{i-1}(P, \cost_P(y)).
		\]
	\end{lemma}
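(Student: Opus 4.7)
The plan is to establish this isomorphism at the chain level by identifying $C^\bullet(P,\cost_P(y))$ with a shifted copy of $\widetilde{C}^\bullet(\lk_P(y))$ and then taking cohomology.

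The first step is to match up the underlying vector spaces. Since $P\smallsetminus\cost_P(y)=\lk_P(y)$, the quotient $C^i(P,\cost_P(y))=\widetilde{C}^i(P)/\widetilde{C}^i(\cost_P(y))$ has a basis indexed by the rank-$(i+1)$ elements of $\lk_P(y)$ (rank measured in $P$). Because $\rk_{\lk_P(y)}(w)=\rk_P(w)-|\supp(y)|$, the same set is also a basis for $\widetilde{C}^{\,i-|\supp(y)|}(\lk_P(y))$, yielding a canonical linear isomorphism. After replacing $i$ by $i-1$, the lemma reduces to checking that the coboundaries correspond. For $w\in\lk_P(y)$, every $z\in\mub_P(x_i,w)$ automatically contains $y$, so the $P$-coboundary $\delta_P(w)=\sum_i(-1)^{u_i}\sum_{z\in\mub_P(x_i,w)}z$ descends unchanged to $C^\bullet(P,\cost_P(y))$. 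On the link side, each atom of $\lk_P(y)$ is a cover of $y$ in $P$ and hence lies in $\mub_P(x_i,y)$ for a unique $i\notin\supp(y)$. Using the Boolean-lattice structure of intervals $[\hat{0},z]$, I would establish the disjoint decomposition
\[
\mub_P(x_i,w)=\bigsqcup_{y'\in\mub_P(x_i,y)}\mub_{\lk_P(y)}(y',w)\qquad (i\notin\supp(w)),
\]
and use the same Boolean argument to show that $\mub_{\lk_P(y)}(y',w)=\emptyset$ for atoms $y'\in\mub_P(x_i,y)\smallsetminus\{w_{\supp(y)\cup\{i\}}\}$ with $i\in\supp(w)$. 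These facts identify the $P$- and $\lk_P(y)$-coboundaries of $w$ term-by-term, up to a choice of signs.

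The main obstacle is sign matching. I would fix an orientation $\mathcal{O}_P$ of the vertices of $P$ and induce an orientation on the atoms of $\lk_P(y)$ by declaring atoms of $\mub_P(x_{i_1},y)$ to precede atoms of $\mub_P(x_{i_2},y)$ whenever $i_1<_{\mathcal{O}_P}i_2$, breaking ties within a single $\mub_P(x_i,y)$ arbitrarily. A direct count then shows that the $P$-sign $(-1)^{u_i}$ and the $\lk$-sign $(-1)^{u'_{y'}}$ differ by the factor $(-1)^{|\{j\in\supp(y):j>_{\mathcal{O}_P}i\}|}$, which depends only on the new index $i$ and on $y$. This discrepancy is absorbed by the sign twist
\[
\psi(w)=(-1)^{\sigma(w)}w,\qquad \sigma(w)=\sum_{k\in\supp(w)\smallsetminus\supp(y)}\bigl|\{j\in\supp(y):j>_{\mathcal{O}_P}k\}\bigr|,
\]
which produces a genuine chain isomorphism; taking cohomology then gives the desired identification. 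In characteristic $2$ the twist is vacuous, recovering the simplest case of Miyazaki's original argument.
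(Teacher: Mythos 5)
Your argument is correct and proves the lemma. The underlying combinatorics is the same as in the paper—identify $C^\bullet(P,\cost_P(y))$ with a degree-shifted copy of $\widetilde{C}^\bullet(\lk_P(y))$, match coboundary terms via the decomposition of $\mub_P(x_i,w)$ along atoms of $\lk_P(y)$, and then reconcile signs. Where you diverge is in the last step. The paper fixes the sign issue at the source: it chooses the orientation $\mathcal{O}_1$ of $P$ so that all indices in $\supp(y)$ come last, which forces the discrepancy $|\{j\in\supp(y):j>_{\mathcal{O}_1}i\}|$ to vanish for every added index $i\notin\supp(y)$, so the identity $w\mapsto w$ is already a chain map. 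You instead allow an arbitrary orientation of $P$ and absorb the discrepancy with the sign twist $\psi(w)=(-1)^{\sigma(w)}w$; the computation $\sigma(z)-\sigma(w)=|\{j\in\supp(y):j>_{\mathcal{O}_P}i\}|$ indeed cancels the discrepancy (with the paper's coboundary convention $u_i=|\{j\in\supp(\cdot):i<_{\mathcal{O}}j\}|$), so $\psi$ is a genuine chain isomorphism. The paper's device is shorter, but yours is arguably more robust, since it does not require freedom to reorder the vertices—useful if one later needs to fix an orientation of $P$ once and for all while varying $y$. One small caution: the proposal leaves the verification of the disjoint decomposition of $\mub_P(x_i,w)$ and the vanishing of $\mub_{\lk_P(y)}(y',w)$ for the ``wrong'' atoms as claims to be filled in; both are straightforward applications of uniqueness in the Boolean intervals $[\hat{0},z]$, and you should spell them out in a final write-up.
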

	\begin{proof}We clearly have an isomorphism of vector spaces $\widetilde{C}^{i-|\supp(y)|}(\lk_P(y))\cong C^i(P, \cost_P(y))$ under the map $\varphi:w\mapsto w$. To show that this is an isomorphism of chain complexes, we must proceed as in the proof of Theorem \ref{ExtTheorem}.
	
	Label the vertices of $P$ by $x_1, \ldots, x_n$, and let $\mathcal{O}_1$ be any ordering of the vertices of $P$ in which the vertices contained in $\supp(y)$ come last. If $w$ is a vertex of $\lk_P(y)$, then $w$ is an element of $\mub(x_j, y)$ for some $j$ with $j\not\in\supp(y)$. Arbitrarily label the elements of $\mub(x_j, y)$ by $x_{j^1}, x_{j^2}, \ldots, x_{j^{|\mub(x_j, y)|}}$. Let $\mathcal{O}_2$ be the partial ordering of the vertices of $\lk_P(y)$ given by setting $j_1^{k_1}<j_2^{k_2}$ if $j_1<_{\mathcal{O}_1}j_2$.
	
	If $w\in(P, \cost_P(y))$, then write $\supp(w)=\{j_1, \ldots, j_t\}$. In $\lk_P(y)$, we have $\supp(w)=\{j_1^{k_1}, \ldots, j_t^{k_t}\}$ for some $k_i$'s. That is, $\mathcal{O}_2$ is a total order when restricted to $\supp(w)$ for any $w\in \lk_P(y)$, so $\mathcal{O}_2$ provides a valid orientation for $\lk_P(y)$.
	
	In $C^t(P, \cost_P(y))$,
	\[
	\delta(w)=\sum_{j\not\in\supp(w)}\left((-1)^{u_j}\sum_{z\in\mub(x_j, w)}z\right)
	\]
	where
	\[
	u_j=|\{i\in\supp(w):i<_{\mathcal{O}_1}j\}|.
	\]	
	On the other hand, in $\widetilde{C}^{t-|\supp(y)|}(\lk_P(y))$,
	\[
	\delta(w)=\sum_{j\not\in\supp(w)}\left(\sum_{k=1}^{|\mub(x_j, w)|}(-1)^{v_{j^k}}\sum_{\mathclap{z\in\mub(x_{j^k}, w)}}z\,\,\,\,\,\right)
	\]
	where
	\[
	v_{j^k}=|\{i\in \supp(w):i<_{\mathcal{O}_2}j^k\}|.
	\]
	Note that
	\[
	\bigcup_{j\not\in\supp(w)}M(x_j, w)=\bigcup_{j\not\in\supp(w)}\left(\bigcup_{k=1}^{|\mub(x_{j^k}, w)|}\mub(x_{j^k}, w)\right)
	\]
	and that $u_j=v_{j^k}$ for fixed $j$ and arbitrary $k$ under our choice of ordering for $\mathcal{O}_1$. The result now follows.
\end{proof}

With this lemma in hand, we are now prepared to prove our second main result.

\begin{theorem}\label{CohomologyStructureTheorem}Let $\alpha\in \ZZ^n$. Then
	\[
	H_\mideal^i(A_P)_\alpha\cong \bigoplus_{\mathclap{\supp(w)=\supp(\alpha)}} H^{i-1}(P, \cost_P(w))
	\]
	if $\alpha\in \ZZ_{\le 0}^n$ and $H_\mideal^i(A_P)_\alpha = 0$ otherwise. Under these isomorphisms, the $A$-module structure of $H_\mideal^i(A_P)$ is given as follows. Let $\gamma = \alpha + \deg(x_j)$. If $\alpha_j<-1$, then $\cdot x_j: H_\mideal^i(A_P)_\alpha \to H_\mideal^i(A_P)_\gamma$ corresponds to the direct sum of identity maps
	\[
	\bigoplus_{\mathclap{\supp(w)=\supp(\alpha)}} H^{i-1}(P, \cost_P(w))\to \bigoplus_{\mathclap{\supp(w)=\supp(\gamma)}} H^{i-1}(P, \cost_P(w)).
	\]
	If $\alpha_j = -1$, then $\cdot x_j$ corresponds to the direct sum of maps
	\[
	\bigoplus_{\mathclap{\supp(w)=\supp(\alpha)}} H^{i-1}(P, \cost_P(w))\to \bigoplus_{\mathclap{\supp(z)=\supp(\gamma)}} H^{i-1}(P, \cost_P(z))
	\]
	induced by the inclusions of pairs $(P, \cost_P(w\smallsetminus\{x_j\}))\to (P, \cost_P(w))$. If $x_j\not\in \supp(\alpha)$, then $\cdot x_j$ is the zero map.
\end{theorem}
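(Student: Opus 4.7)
The first assertion---that $H_\mideal^i(A_P)_\alpha$ decomposes as the claimed direct sum for $\alpha\in\ZZ_{\le 0}^n$ and vanishes otherwise---is immediate from combining Theorem \ref{localCohomology} with Lemma \ref{lkCost} applied summand by summand. The substance of the proof is the description of the $A$-module structure, which I would establish by lifting everything to the $\Ext$ level and tracking basis homomorphisms through the chain-level isomorphisms built in the proofs of Theorems \ref{ExtTheorem} and \ref{localCohomology} and Lemma \ref{lkCost}.

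Fix $\alpha\in\ZZ_{\le 0}^n$, a vertex $x_j$, and set $\gamma=\alpha+\deg(x_j)$. Choose $\ell$ large enough that $1-\ell\le\alpha_i$ (and hence $1-\ell\le\gamma_i$) for all $i$. By Corollary \ref{canonicalMapIso}, the canonical maps identify $H_\mideal^i(A_P)_\alpha$ and $H_\mideal^i(A_P)_\gamma$ with $\Ext_A^i(A/\mideal_\ell,A_P)_\alpha$ and $\Ext_A^i(A/\mideal_\ell,A_P)_\gamma$, which are computed via $\Hom_A(K_\bullet^\ell,A_P)$. Multiplication by $x_j$ is the chain map induced by post-composition with $\cdot x_j:A_P\to A_P$; concretely, it sends a basis homomorphism $f_{F,m}$ (with $m$ standard) to $f_{F,x_jm}$ once $x_jm$ is expanded as a standard monomial by means of Lemma \ref{prodOfMonomials}. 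A preliminary observation is that in the degrees in question the rank computation in the proof of Theorem \ref{ExtTheorem} specializes to $|F|=|\supp(y)|$, which together with the inclusion $F\subseteq\supp(y)$ forces $F=\supp(y)$, where $y$ is the leading variable of $m$; in particular $\alpha_j<0$ forces $j\in F$.

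The plan is then a case analysis on $\alpha_j$. When $\alpha_j<-1$, both $\alpha$ and $\gamma$ have $j$ in their supports; Case 3 of Lemma \ref{prodOfMonomials} (with $\ell=1$) yields $x_jm$ as a single standard monomial with the same leading variable $y$, and since both $F$ and the summand index $y_{\supp(\alpha)}=y_{\supp(\gamma)}$ are unchanged, $f_{F,x_jm}$ corresponds to the same face $y$ in the same summand, giving the claimed identity. When $\alpha_j=-1$, the same lemma again shows $x_jm$ is standard with leading variable $y$, but now $\supp(\gamma)=\supp(\alpha)\smallsetminus\{j\}$, so $f_{F,x_jm}$ lands in the summand indexed by $y_{\supp(\alpha)}\smallsetminus\{x_j\}$. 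Passing through the chain-level isomorphism $y\mapsto y$ of Lemma \ref{lkCost}, this is precisely the cochain-level map induced by the pair inclusion $(P,\cost_P(y_{\supp(\alpha)}\smallsetminus\{x_j\}))\hookrightarrow(P,\cost_P(y_{\supp(\alpha)}))$, as desired. When $\alpha_j=0$ (the only possibility with $\alpha_j\ge 0$, since $\alpha\in\ZZ_{\le 0}^n$), $\gamma\notin\ZZ_{\le 0}^n$, so $H_\mideal^i(A_P)_\gamma=0$ by Theorem \ref{localCohomology} and the map is automatically zero.

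The main obstacle I anticipate is sign and orientation bookkeeping: the isomorphisms $\varphi$ of Theorem \ref{ExtTheorem} and of Lemma \ref{lkCost} each depend on choices of orientation on the ambient link. Because $\cdot x_j$ leaves the index set $F$ unchanged---rather than adjoining a new element as the Koszul differential does---the sign $(-1)^{u_j}$ that appears in the $\Ext$ differential does not enter, and the orderings used in the earlier proofs can be chosen consistently on $\lk_P(y_{\supp(\alpha)})$ and $\lk_P(y_{\supp(\gamma)})$ so that the correspondence $y\mapsto y$ is literally sign-free. Verifying this compatibility carefully, especially in the case $\alpha_j=-1$ where we cross between summands indexed by different faces, is the most delicate step but should go through by the same orientation construction already used in the proof of Theorem \ref{ExtTheorem}.
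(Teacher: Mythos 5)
Your proposal matches the paper's proof essentially step for step: reduce to the $\Ext$ level via Corollary \ref{canonicalMapIso}, trace the multiplication $f_{F,m}\mapsto f_{F,x_jm}$ through the basis isomorphism of Theorem \ref{ExtTheorem} and the chain-level map $y\mapsto y$ of Lemma \ref{lkCost}, apply Lemma \ref{prodOfMonomials} to see that $x_jm$ remains standard with leading variable $y$, and then case-split on $\alpha_j$. Your ``preliminary observation'' that $F=\supp(y)$ and $j\in F$ is a small but useful addition, as it supplies the justification for the paper's terse assertion that $x_j\prec y$; the orientation/sign hedging you flag is likewise left implicit in the paper's own argument, so you are at the same level of rigor.
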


\begin{proof}
	The first statement is immediate, as it is a reformulation of Theorem \ref{localCohomology} using the isomorphisms of Lemma \ref{lkCost}. Let $\ell$ be such that $1-\ell\le \alpha_i\le 0$ for all $i$. By Corollary \ref{canonicalMapIso}, we only need to show that the multiplication maps above are valid for $\Ext_A^t(A/\mideal_\ell, A_P)_\alpha$, as the following diagram commutes:
	\[
	\begin{tikzcd}
	\Ext_A^t(A/\mideal_\ell, A_P)_\alpha \arrow[r, "\sim"] \arrow[d, "\cdot x_j"] & H_\mideal^t(A_P)_\alpha \arrow[d, "\cdot x_j"] \\
	\Ext_A^t(A/\mideal_\ell, A_P)_\gamma \arrow[r, "\sim"] & H_\mideal^t(A_P)_\gamma.
	\end{tikzcd}
	\]
	First note that if $x_j\not\in \supp(\alpha)$, then $\gamma_j=1$ so that $\Ext_A^t(A/\mideal_\ell, A_P)_\gamma= H_\mideal^t(A_P)_\gamma=0$. Assume now that $x_j\in\supp(\alpha)$ and let $f_{F, m}$ be a basis element for $\Hom_A^t(A/\mideal_\ell, A_P)_\alpha$ in which $m=y^p\prod_{i=1}^k y_i^{p_i}$ with $y\succ y_1 \succ y_2\succ\cdots \succ y_k$.
	
	By the proof of Theorem \ref{ExtTheorem}, $f_{F, m}$ corresponds to the face $y$ of $\widetilde{C}^{t-|\supp(\alpha)|-1}(\lk_P(y_{\supp(\alpha)}))$. On the other hand, $x_j\cdot f_{F, m}=f_{F, x_jm}$ and $x_j\prec y$, so by Lemma \ref{prodOfMonomials} we can write $x_j m$ as a standard monomial $m'$ in which the leading variable of $m'$ is still $y$. Then $f_{F, m'}$ corresponds to the face $y$ of $\widetilde{C}^{t-1}(P, \cost_P(y_{\supp(\gamma)}))$.
	
	Suppose first that $\gamma_j<0$. Then $\supp(\alpha)=\supp(\gamma)$, so
	\[
		\widetilde{C}^{t-1}(P, \cost_P(y_{\supp(\alpha)}))=\widetilde{C}^{t-1}(P, \cost_P(y_{\supp(\gamma)}))
	\]
	and $\cdot x_v$ corresponds to the identity map, as desired.
	
	If $\gamma_j=0$, then we have a map
	\[
	\widetilde{C}^{t-1}(P, \cost_P(y_{\supp(\alpha)})) \xrightarrow{\cdot x_v} \widetilde{C}^{t-1}(P, \cost_P(y_{\supp(\alpha)\smallsetminus\{x_j\}}))
	\]
	taking $y$ to $y$, which is identical to the map induced by the inclusion of pairs \linebreak $(P, \cost_P(y_{\supp(\alpha)\smallsetminus\{x_j\}}))\to (P, \cost_P(y_{\supp(\alpha)}))$.
\end{proof}

\section{Characterizing Buchsbaumness}\label{sect:characterizing}
Central to the historical study of simplicial complexes have been the notions of a complex being \textbf{Cohen-Macaulay} or \textbf{Buchsbaum}. In analogy to the results of Reisner (\cite{Reisner}) and Schenzel (\cite{Schenzel}) for simplicial complexes, we will say that a poset $P$ is Cohen-Macaulay (over $\field$) if its order complex is Cohen-Macaulay (over $\field$) in the topological sense; that is,
\[
\widetilde{H}^i(\lk_{\Delta(\overline{P})}(F))=0
\]
for all faces $F\in\Delta(\overline{P})$ and all $i<\dim(\lk_{\Delta(\overline{P})}(F))$. We say that $P$ is Buchsbaum (over $\field$) if it is pure and the link of every vertex of $\Delta(\overline{P})$ is Cohen-Macaulay (over $\field$).

There are also algebraic notions of Cohen-Macaulayness and Buchsbaumness. Let $R$ be a ring with maximal ideal $\mideal$ and let $M$ be a noetherian $R$-module of dimension $d$. We call a sequence of elements $\theta_1, \ldots, \theta_d\in\mideal$ a \textbf{system of parameters} if $M/(\theta_1, \ldots, \theta_d)M$ is a finite-dimensional vector space over $\field$. If
\[
(\theta_1, \ldots, \theta_{i-1})M:\theta_i = 0
\]
for $i=1, \ldots, d$, then we say that $\theta_1, \ldots, \theta_d$ is an \textbf{$M$-sequence}. If every homogeneous system of parameters for $M$ is an $M$-sequence, then we say that $M$ is a \textbf{Cohen-Macaulay} $R$-module. If
\[
(\theta_1, \ldots, \theta_{i-1})M:\theta_i = (\theta_1, \ldots, \theta_{i-1})M:\mideal
\]
for $i=1, \ldots, d$, then we say that $\theta_1, \ldots, \theta_d$ is a \textbf{weak $M$-sequence}. If every homogeneous system of parameters for $M$ is a weak $M$-sequence, then we call $M$ a \textbf{Buchsbaum} $R$-module. We say that the ring $R$ is Cohen-Macaulay (resp. Buchsbaum) if it is Cohen-Macaulay (resp. Buchsbaum) as a module over itself. Stanley showed (\cite[Theorem 3.4, p. 314]{Polytopes}) that the topological notion of Cohen-Macaulayness is tied to this algebraic property (proofs of the forward direction appear frequently in the literature, but we have had trouble finding a proof of the reverse; however, it does follow quickly from Duval's results, e.g., \cite[Corollary 6.1]{Duval}):
\begin{theorem}
	Let $P$ be a simplicial poset. Then $P$ is Cohen-Macaulay as a poset over $\field$ if and only if $A_P$ is a Cohen-Macaulay ring.
\end{theorem}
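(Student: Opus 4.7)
The plan is to combine the standard local-cohomology characterization of Cohen--Macaulayness with Theorem~\ref{localCohomology}, and then translate the resulting combinatorial condition into a statement about the order complex $\Delta(\overline{P})$. Recall that a finitely generated graded $A$-module $M$ of Krull dimension $d$ is Cohen--Macaulay if and only if $H_\mideal^i(M) = 0$ for all $i < d$. Since $\dim A_P = 1 + \dim P$, substituting the formula of Theorem~\ref{localCohomology} and noting that $\supp(\alpha)$ ranges over every subset of $\{1,\ldots,n\}$ as $\alpha$ runs over $\ZZ_{\le 0}^n$, the condition $H_\mideal^i(A_P) = 0$ for all $i < 1 + \dim P$ is equivalent to the combinatorial vanishing
\[
\widetilde{H}^{k}(\lk_P(w)) = 0 \quad \text{for every } w \in P \text{ and every } k < \dim P - \rk(w). \qquad (\ast)
\]

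It then remains to show that $(\ast)$ is equivalent to $\Delta(\overline{P})$ being Cohen--Macaulay as a simplicial complex in the sense of Reisner. The most direct route is topological: by a theorem of Munkres, Cohen--Macaulayness of a simplicial complex over $\field$ depends only on the homeomorphism type of its geometric realization. Since $|\Delta(\overline{P})|$ is the barycentric subdivision of $|P|$, both spaces share a common underlying topological space, so the question becomes whether $(\ast)$ is equivalent to topological Cohen--Macaulayness of $|P|$. Unpacking the latter pointwise, at any interior point $p$ of the cell $|w|$ (where $w \in P$ has rank $r$) the topological link is $\partial|w| \ast |\lk_P(w)| \simeq \Sigma^{r-1}|\lk_P(w)|$, so the required vanishing of reduced cohomology of this link in degrees below its dimension becomes, via the suspension isomorphism, precisely $(\ast)$.

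The main obstacle is this final translation: correctly identifying the topological link of a point in $|P|$ with the combinatorial link $\lk_P(w)$ (which in our convention includes $w$ itself as its bottom element, while contributing only as an empty cell to the geometric realization), and confirming that Munkres' topological invariance applies in the present regular CW-complex setting. Once this dictionary is in place, condition $(\ast)$ coincides with the Reisner criterion applied to $\Delta(\overline{P})$. The forward direction was already established in the literature (in particular it can be deduced from Duval's identification of the local cohomology of $A_P$ with the links of $P$, going the other way through the same chain of equivalences), so the above reverse implication completes the proof.
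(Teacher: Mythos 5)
The paper does not actually prove this theorem: it cites Stanley for the result, remarks that the forward direction appears in several places, and notes that the reverse direction ``follows quickly'' from Duval's Corollary 6.1, which is precisely the local-cohomology computation re-proved in the paper as Theorem~\ref{localCohomology}. Your sketch is therefore a self-contained expansion of the route the paper implicitly suggests, and its overall structure is sound. Step one is correct: $\dim A_P = 1 + \dim P$, the local-cohomology vanishing criterion for depth is standard, $\supp(\alpha)$ ranges over all subsets of $\{1,\ldots,n\}$ as $\alpha$ runs over $\ZZ^n_{\le 0}$, and so Theorem~\ref{localCohomology} converts $H^i_\mideal(A_P)=0$ for $i < 1+\dim P$ into exactly your condition $(\ast)$.

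The delicate point is the one you flag: passing from $(\ast)$ to Reisner's criterion for $\Delta(\overline{P})$. Your Munkres detour does work once made precise. Munkres' criterion is a statement about the underlying topological space, so it may be checked on $|P|\cong|\Delta(\overline{P})|$; for $p$ in the interior of the cell $|w|$ with $\rk(w)=r$, one has $H_i(|P|,|P|\smallsetminus p)\cong \widetilde H_{i-1}\bigl(\partial|w|\ast|\lk_P(w)|\bigr)\cong\widetilde H_{i-r}\bigl(|\lk_P(w)|\bigr)$, and vanishing of this for $i<\dim P$, together with $\widetilde H_i(|P|)=0$ (the $w=\hat 0$ case), is $(\ast)$. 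You should note that this local computation requires a justification for regular CW complexes (e.g.\ by triangulating via the barycentric subdivision $\Delta(\overline P)$ and quoting the simplicial-complex version), and that Munkres' condition automatically enforces purity of $P$ (a maximal cell of subtop dimension produces nonvanishing local homology in degree below $\dim P$), which is also the content of $(\ast)$ when some $\lk_P(w)=\{w\}$. A route avoiding Munkres altogether is to use the open-interval criterion for Cohen--Macaulay order complexes: every interval $(x,y)$ in $\hat P = P\cup\{\hat 1\}$ with $y\in P$ is the proper part of a Boolean lattice and hence a sphere, so the only nontrivial vanishing conditions come from the intervals $(w,\hat 1)=\overline{\lk_P(w)}$, recovering $(\ast)$ directly; this is closer in spirit to the isomorphisms~(\ref{linkIsoms}) that the paper invokes in the Buchsbaum argument.
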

In fact, an additional characterization is that $A_P$ is a Cohen-Macaulay $A$-module. Our next goal is obtaining an analogous characterization for the Buchsbaum property. To prove our result, we will need the following homological characterization of Buchsbaumness (\cite[Theorem I.3.7]{StVo}):
\begin{theorem}\label{surjectivityThm}
	Let $\field$ be an infinite field and let $M$ be a graded $A$-module with $\dim (M)>0$. Then $M$ is a Buchsbaum $A$-module if and only if the canonical maps
	\[
	\Ext_A^i(\field, M)\to H_\mideal^i(M)
	\]
	are surjective for $i<\dim(M)$.
\end{theorem}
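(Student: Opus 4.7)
The plan is to reduce the theorem to Schenzel's cohomological characterization of Buchsbaum modules, namely that $M$ is Buchsbaum if and only if $\mideal \cdot H_\mideal^i(M) = 0$ for all $i<\dim(M)$. With this criterion in hand, the theorem becomes the equivalence between surjectivity of the canonical maps $\Ext_A^i(\field,M)\to H_\mideal^i(M)$ and the annihilator condition $\mideal \cdot H_\mideal^i(M)=0$ for $i<\dim M$.

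To establish Schenzel's criterion, I would first prove the forward direction: for any homogeneous system of parameters $\theta_1,\ldots,\theta_d$ of $M$, the local cohomology $H_\mideal^i(M)$ may be identified with the direct limit of Koszul cohomology modules $H^i(\theta_1^t,\ldots,\theta_d^t;M)$, and the weak-sequence property of a Buchsbaum module then forces each $\theta_j$ to annihilate this direct limit in the range $i<d$. Since $\field$ is infinite, an s.o.p.\ can be chosen whose elements generate $\mideal$ modulo $\mideal^2$, which yields $\mideal \cdot H_\mideal^i(M)=0$. The reverse direction is subtler: assuming the annihilator condition, I would verify the colon relations $(\theta_1,\ldots,\theta_{i-1})M:\theta_i=(\theta_1,\ldots,\theta_{i-1})M:\mideal$ for an arbitrary s.o.p.\ by induction on $i$, using the long exact local-cohomology sequences associated to the short exact sequences $0 \to M/(\theta_1,\ldots,\theta_{i-1})M \xrightarrow{\theta_i} M/(\theta_1,\ldots,\theta_{i-1})M \to M/(\theta_1,\ldots,\theta_i)M \to 0$.

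Next, I would relate the surjectivity of the canonical maps to Schenzel's condition. Since $\field=A/\mideal$, the module $\Ext_A^i(\field,M)$ is annihilated by $\mideal$, so its image in $H_\mideal^i(M)$ always lies in the socle $(0:_{H_\mideal^i(M)}\mideal)$; thus if the canonical map is surjective we immediately get $\mideal \cdot H_\mideal^i(M)=0$. For the converse, using the identification $H_\mideal^i(M)=\varinjlim_\ell \Ext_A^i(A/\mideal^\ell,M)$, I would show that any element of $H_\mideal^i(M)$ killed by $\mideal$ admits a representative in $\Ext_A^i(A/\mideal^\ell,M)$ that is itself killed by $\mideal$, then invoke the long exact $\Ext$ sequence coming from $0 \to \mideal/\mideal^\ell \to A/\mideal^\ell \to \field \to 0$ to lift that representative back to $\Ext_A^i(\field,M)$.

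The hardest step is this final lifting: a representative $\tilde\eta$ of an $\mideal$-killed element $\eta\in H_\mideal^i(M)$ need not itself be $\mideal$-killed at any finite level of the direct system, and the transition maps introduce further complications. The crux of the proof is controlling this interplay, either by passing to a sufficiently large $\ell'$ at which the image of $\tilde\eta$ becomes $\mideal$-killed and then applying the connecting homomorphisms, or by a direct spectral-sequence argument computing $\Ext_A^\bullet(\field,M)$ from the local cohomology modules of $M$.
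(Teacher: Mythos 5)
The paper does not prove this theorem at all --- it is cited directly as \cite[Theorem I.3.7]{StVo}, a foundational result from St\"uckrad and Vogel's treatise. So any proposal here is attempting something the authors deliberately deferred to the literature, and the comparison is against that reference rather than an argument in the paper.

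Your proposed reduction contains a genuine mathematical error. You begin by asserting that $M$ is Buchsbaum if and only if $\mideal \cdot H_\mideal^i(M) = 0$ for all $i < \dim M$, and then attempt to prove the theorem by showing the surjectivity condition equivalent to this annihilator condition. But the annihilator condition defines the strictly weaker class of \emph{quasi-Buchsbaum} modules: every Buchsbaum module satisfies it, but there are well-known examples (appearing already in St\"uckrad--Vogel) of quasi-Buchsbaum modules that are not Buchsbaum. Thus the ``reverse direction'' you describe --- deducing the weak-sequence property for an arbitrary s.o.p.\ from $\mideal \cdot H_\mideal^i(M) = 0$ --- is false, not merely ``subtler.'' Since the surjectivity of the canonical maps $\Ext_A^i(\field,M)\to H_\mideal^i(M)$ is exactly equivalent to Buchsbaumness, it is strictly stronger than the annihilator condition, and the intermediate equivalence you rely on cannot hold.

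Concretely, the step you flag as hardest is where the collapse occurs: even if $\eta\in H_\mideal^i(M)$ is killed by $\mideal$, a representative at a finite stage $\Ext_A^i(A/\mideal^\ell,M)$ need not lie in the image of $\Ext_A^i(\field,M)$. The long exact $\Ext$-sequence from $0 \to \mideal/\mideal^\ell \to A/\mideal^\ell \to \field \to 0$ requires the class to die in $\Ext_A^i(\mideal/\mideal^\ell, M)$, which is a genuinely stronger condition than being $\mideal$-torsion; no amount of increasing $\ell$ remedies this in the quasi-Buchsbaum-but-not-Buchsbaum examples. The actual proof in St\"uckrad--Vogel of the implication from surjectivity to the weak-sequence property proceeds by a rather different route, constructing and comparing systems of parameters directly and using the finiteness of the relevant local cohomology modules; it does not pass through the annihilator criterion as an intermediate equivalence.
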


Along with Theorem \ref{CohomologyStructureTheorem} and Corollary \ref{canonicalMapIso}, this result will provide a quick proof of the following generalization of \cite[Theorem II.2.4]{StVo}, the forward direction of which appeared first as \cite[Proposition 6.2]{NS-Socles}:

\begin{theorem}\label{BuchsbaumCriterionForAP}
	Let $P$ be a simplicial poset. Then $P$ is Buchsbaum as a poset if and only if $A_P$ is a Buchsbaum $A$-module.
\end{theorem}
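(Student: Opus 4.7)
The approach will be to apply the surjectivity criterion of Theorem \ref{surjectivityThm}: $A_P$ is Buchsbaum if and only if each canonical map $\Ext_A^i(\field, A_P) \to H_\mideal^i(A_P)$ is surjective for $i < \dim A_P$. I would check this one graded piece at a time. Corollary \ref{canonicalMapIso} with $\ell = 1$ implies the map is already an isomorphism in every degree $\alpha$ with all $\alpha_i \ge 0$, and since $H_\mideal^i(A_P)_\alpha$ vanishes outside $\ZZ_{\le 0}^n$ this hands us surjectivity in degree $\alpha = 0$ for free. So everything reduces to controlling graded degrees $\alpha \in \ZZ_{\le 0}^n \smallsetminus \{0\}$.

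For the direction $P$ Buchsbaum $\Rightarrow A_P$ Buchsbaum, I would use that purity of $P$ gives $\dim \lk_P(w) = \dim P - |\supp(w)|$, while Cohen--Macaulayness of each vertex link $\lk_P(v)$ propagates to Cohen--Macaulayness of every $\lk_P(w)$ with $w \ne \hat 0$ (since the face links of $\lk_P(v)$ have the form $\lk_P(w')$ for $w' \succeq v$). This yields $\widetilde H^j(\lk_P(w)) = 0$ for every nonempty $w$ and every $j < \dim \lk_P(w)$. Plugging into Theorem \ref{localCohomology} and using the arithmetic $j = i - |\supp(w)| - 1 < \dim P - |\supp(w)|$ iff $i < \dim A_P$ then shows $H_\mideal^i(A_P)_\alpha = 0$ for every $\alpha \in \ZZ_{\le 0}^n \smallsetminus \{0\}$ and every $i < \dim A_P$, which together with the degree-$0$ isomorphism gives the required surjectivity.

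For the converse, I would handle purity and the link condition separately. Purity of $P$ is imported from the standard fact that Buchsbaum modules are equidimensional: the minimal primes of $A_P$ correspond to facets of $P$ with dimension equal to the facet's rank, so equidimensionality forces all facets to share a common rank. For the link condition, Theorem \ref{ExtTheorem} with $\ell = 1$ gives $\Ext_A^i(\field, A_P)_\alpha = 0$ whenever some $\alpha_k \le -2$, so surjectivity forces $H_\mideal^i(A_P)_\alpha = 0$ in that range. Since by Theorem \ref{CohomologyStructureTheorem} this module depends only on $\supp(\alpha)$, and any nonempty subset of $V$ can be realized as $\supp(\alpha)$ with some coordinate $\le -2$, Theorem \ref{localCohomology} translates the vanishings into $\widetilde H^j(\lk_P(w)) = 0$ for every $w \ne \hat 0$ and every $j < \dim \lk_P(w)$, which is exactly the Cohen--Macaulayness required for each $\lk_P(v)$.

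The main obstacle I anticipate is the purity portion of the converse: the surjectivity criterion itself extracts only link vanishings and does not visibly encode purity of $P$, so I must pull in equidimensionality of Buchsbaum modules as an external input from St\"uckrad--Vogel. Everything else is a direct bookkeeping exercise through the index calculus already developed in Theorems \ref{ExtTheorem}, \ref{localCohomology}, \ref{CohomologyStructureTheorem}, and Corollary \ref{canonicalMapIso}.
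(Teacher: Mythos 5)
Your forward direction and the link-vanishing half of the converse track the paper closely: the paper also reduces to Theorem \ref{surjectivityThm}, handles degree $0$ via Corollary \ref{canonicalMapIso}, and for the converse extracts $\widetilde H^j(\lk_P(w))=0$ by forcing $H_\mideal^i(A_P)_\alpha=0$ for $\alpha$ with a coordinate $\le -2$ (the paper does this through $\mideal\cdot H_\mideal^i(A_P)=0$ from St\"uckrad--Vogel rather than through surjectivity plus the $\ell=1$ case of Theorem \ref{ExtTheorem}, but these are two faces of the same fact). Where you genuinely diverge is the purity step. You import equidimensionality of Buchsbaum modules together with the claim that the minimal primes in $\mathrm{Supp}_A(A_P)$ correspond to facets of $P$ with matching dimensions. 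That claim is true, but it is not established anywhere in this paper, and for face rings of simplicial posets it is not entirely free: one has to identify $\mathrm{Ann}_A(A_P)$ (a monomial ideal whose radical is the Stanley--Reisner ideal of the simplicial complex $\{\supp(y):y\in P\}$), check that its minimal primes correspond to the distinct facet supports, and note that distinct facets of $P$ with the same support do not create extra components. The paper instead detects non-purity cohomologically and stays entirely inside its own machinery: if $y$ were a maximal face of submaximal rank $k+1$, then $\widetilde H^{-1}(\lk_P(y))\ne 0$ sits as a summand of $H_\mideal^{k+1}(A_P)_\alpha$ for $\alpha$ with $\supp(\alpha)=\supp(y)$ and some $\alpha_i\le -2$, and Theorem \ref{CohomologyStructureTheorem} makes $\cdot x_i$ the identity on that summand, contradicting $\mideal\cdot H_\mideal^{k+1}(A_P)=0$. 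Your route is conceptually natural and mirrors the classical Stanley--Reisner picture, but if you use it you owe the reader a proof (or a citation) of the minimal-primes-are-facets fact for $A_P$; the paper's argument is longer to state but self-contained given Theorems \ref{localCohomology} and \ref{CohomologyStructureTheorem}. One small point on your forward direction: the definition of Buchsbaum poset already requires $\lk_{\Delta(\overline P)}(v)$ to be Cohen--Macaulay for \emph{every} $v\in P\smallsetminus\{\hat 0\}$, not just for atoms of $P$, so the ``propagation'' from vertex links to all face links is unnecessary --- the hypothesis hands you $\widetilde H^j(\lk_P(w))=0$ for all $w\ne\hat 0$ directly via the isomorphisms (\ref{linkIsoms}).
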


\begin{proof}Suppose first that $P$ is a Buchsbaum poset. Under the isomorphisms
	\begin{equation}\label{linkIsoms}
	\widetilde{H}^i(\lk_{\Delta(\overline{P})}(F))\cong\widetilde{H}^i(\Delta(\overline{\lk_P(w)}))\cong \widetilde{H}^i(\lk_P(w))
	\end{equation}
	for $F$ a saturated chain in $(\hat{0}, w]$, by Theorem \ref{localCohomology} along with the purity of $P$, $H_\mideal^i(A_P)$ is concentrated in degree zero for $i<\dim(A_P)$. By Corollary \ref{canonicalMapIso}, the canonical map $\Ext_A^i(\field, A_P)_0\to H_\mideal^i(A_P)_0$ is always an isomorphism. Hence, $A_P$ satisfies the conditions of Theorem \ref{surjectivityThm} and $A_P$ is a Buchsbaum $A$-module.

	Now suppose that $A_P$ is a Buchsbaum $A$-module, and for the moment assume that $P$ is pure. By \cite[Corollary I.2.4]{StVo}, $\mideal\cdot H_\mideal^i(A_P)=0$ for all $i\not=\dim A_P$. By Theorem \ref{CohomologyStructureTheorem} and purity, this is only possible if
	\[
	\widetilde{H}^i(\lk_P(y))=0
	\]
	for all $y\not=\emptyset$ in $P$ and all $i<\dim(\lk_P(y))$. Then once more, the isomorphisms in (\ref{linkIsoms}) imply that $\widetilde{H}^i(\lk_{\Delta(\overline{P})}(F))=0$ for all faces $F\not=\emptyset$ of $\Delta(\overline{P})$ and all $i<\dim (\lk_{\Delta(\overline{P})}(F))$. In particular, $\lk_{\Delta(\overline{P})}(v)$ is Cohen-Macaulay for each vertex $v$ of $\Delta(\overline{P})$.
	
	It remains to show that $P$ is pure. Suppose to the contrary that there exists some maximal face $y\in P$ with $\dim(y)=k$, while $\dim(P)=d-1$ and $k<d-1$. Note first that $\widetilde{H}^{-1}(\lk_P(y))\not=0$.
	
	Let $\alpha\in\ZZ^n$ be such that $\supp(\alpha)=\supp(\alpha)+\mathbf{e}_i=\supp(y)$ for some $\mathbf{e}_i$. By Theorem \ref{localCohomology}, $\widetilde{H}^{-1}(\lk_P(y))$ is a submodule of $H_\mideal^{k+1}(A_P)_\alpha$. Furthermore, the multiplication map $\cdot x_i:H_\mideal^{k+1}(A_P)_\alpha\to H_\mideal^{k+1}(A_P)_{\alpha+\mathbf{e}_i}$ is the identity map on this summand by Theorem \ref{CohomologyStructureTheorem}. Then $\mideal\cdot H_\mideal^{k+1}(A_P)\not=0$ and $k+1<\dim(A_P)$, which contradicts \cite[Corollary I.2.4]{StVo}. Hence, $P$ must be pure.
\end{proof}

\section{Comments and further applications}\label{sect:comments}

Our Theorem \ref{CohomologyStructureTheorem} detailing the $A$-module structure of $H_\mideal^i(A_P)$ has the potential to extend many results about simplicial complexes relying upon Gr\"abe's theorem to the simplicial poset case. For example, many of the theorems for face rings of simplicial complexes with isolated singularities found in \cite{MNS-Sing}, \cite{NS-Sing}, and \cite{AlmostBuchs} appear likely to be true in this greater generality.

On the other hand, Theorems \ref{ExtTheorem} and \ref{CohomologyStructureTheorem} may be worth further specializing in their own right. Simplicial posets come about frequently as quotients of simplicial complexes by free group actions (see \cite[Section 6]{Garsia}). It has been shown in \cite{GroupActions} that the $\Ext$- and local cohomology modules of a simplicial complex admitting a free action by an abelian group inherit some nice symmetry properties. It would be interesting to see to what extent these results continue to hold in the simplicial poset case.

\section*{Acknowledgments}The author was partially supported by NSF grants DMS-1361423 and DMS-1664865. I am very grateful to Isabella Novik for many helpful suggestions and guidance during the writing of this paper, as well as to Satoshi Murai for additional comments. Finally, I would like to thank Tim R\"omer for pointing me toward his previous related research with Morton Brun and Winfried Bruns.

\bibliographystyle{alpha}
\bibliography{SimplicialPosets-biblio}
\end{document}